\theoremstyle{plain}
\newtheorem{prop}{Proposition}[section]
\newtheorem{coro}[prop]{Corollary}
\newtheorem{lemm}[prop]{Lemma}
\newtheorem{theorem}[prop]{Theorem}
\theoremstyle{definition}
\newtheorem{defi}[prop]{Definition}
\numberwithin{equation}{section}
\def\Reff#1; #2; #3; #4; #5; #6; #7\par{%
\bibitem{#1} #2, {\it #3}, #4 {\bf #5} (#6) #7}
\def\Ref#1; #2; #3; #4\par{%
\bibitem{#1} #2, {\it #3}, #4}
\renewcommand\aa[1]{a_{#1}}
\newcommand\aaa[2]{a_{#1, #2}}
\newcommand{\bb}[2]{b_{#1,#2}}
\newcommand\bbs[2]{b^*_{#1, #2}}
\newcommand\BKL[1]{B_{#1}^{{\scriptscriptstyle+}*}}
\newcommand\BP[1]{B_{#1}^{\scriptscriptstyle+}}
\newcommand\BR[1]{B_{#1}}
\newcommand\card{\mathtt{\#}}
\newcommand\Cat[1]{\mathrm{Cat}_{#1}}
\newcommand\CCCC{\mathbb{C}}
\newcommand\dd{d}
\newcommand\Deltad{\Delta^*}
\newcommand\ee{e}
\newcommand\ff{f}
\let\ge=\geqslant
\renewcommand{\gg}{g}
\newcommand\GG{G}
\newcommand\ii{i}
\newcounter{ITEM}
\newcommand\ITEM[1]{\setcounter{ITEM}{#1}\leavevmode\hbox{\rm(\roman{ITEM})}}
\newcommand\jj{j}
\newcommand{\inv}{^{-1}}
\newcommand{\kk}{k}
\newcommand{\kkl}{k-1}
\let\le=\leqslant
\newcommand\LGG[2]{\Vert#1\Vert_{#2}}
\newcommand\Mat[1]{A_{#1}^*}
\newcommand\NC[1]{\mathrm{NC}(#1)}
\newcommand\nn{n}
\newcommand\NN[2]{N^{#1}_{#2}}
\newcommand\NNp[2]{N^{#1}_{#2,+}}
\newcommand\pdots{\hspace{0.2ex}{\cdot}{\cdot}{\cdot}\hspace{0.2ex}}
\newcommand\PP{P}
\newcommand{\resp}{{\it resp.{~}}}
\newcommand\sig[1]{\sigma_{#1}}
\newcommand\siginv[1]{\sigma_{#1}^{-1}}
\renewcommand\ss{s}
\renewcommand\SS{S}
\newcommand\SSh{\widehat{S}}
\newcommand\Sym[1]{\mathfrak{S}_{#1}}
\renewcommand\tt{t}
\def\VR(#1,#2){\vrule width0pt height#1mm depth#2mm}
\newcommand\wdots{, ...\hspace{0.2ex},}
\newcommand{\zz}{z}
\newcommand\drawwhitedot[1]{\pscircle[linewidth=.8pt, fillstyle=solid](#1){2pt}}
\def\Dv{%
\VR(6.5,4)\begin{picture}(10.5,0)(-4,-1)
\psset{unit=0.4mm, linewidth=.8pt, ncurv=1}\SpecialCoor\degrees[12]
\pnode(10;3){q1}\pnode(10;11){q2}\pnode(10;7){q3}
\pscircle[dimen=middle](0,0){10}
\drawwhitedot{q1}\drawwhitedot{q2}\drawwhitedot{q3}
\end{picture}}
\def\Da{%
\VR(6.5,4)\begin{picture}(10.5,0)(-4,-1)
\psset{unit=0.4mm, linewidth=.8pt, ncurv=1}\SpecialCoor\degrees[12]
\pnode(10;3){q1}\pnode(10;11){q2}\pnode(10;7){q3}
\pscircle[dimen=middle](0,0){10}
\psline(q1)(q2)
\drawwhitedot{q1}\drawwhitedot{q2}\drawwhitedot{q3}
\end{picture}}
\def\Db{%
\VR(6.5,4)\begin{picture}(10.5,0)(-4,-1)
\psset{unit=0.4mm, linewidth=.8pt, ncurv=1}\SpecialCoor\degrees[12]
\pnode(10;3){q1}\pnode(10;11){q2}\pnode(10;7){q3}
\pscircle[dimen=middle](0,0){10}
\psline(q2)(q3)
\drawwhitedot{q1}\drawwhitedot{q2}\drawwhitedot{q3}
\end{picture}}
\def\Dc{%
\VR(6.5,4)\begin{picture}(10.5,0)(-4,-1)
\psset{unit=0.4mm, linewidth=.8pt, ncurv=1}\SpecialCoor\degrees[12]
\pnode(10;3){q1}\pnode(10;11){q2}\pnode(10;7){q3}
\pscircle[dimen=middle](0,0){10}
\psline(q3)(q1)
\drawwhitedot{q1}\drawwhitedot{q2}\drawwhitedot{q3}
\end{picture}}
\def\DD{%
\VR(6.5,4)\begin{picture}(10.5,0)(-4,-1)
\psset{unit=0.4mm, linewidth=.8pt, ncurv=1}\SpecialCoor\degrees[12]
\pnode(10;3){q1}\pnode(10;11){q2}\pnode(10;7){q3}
\pscircle[dimen=middle](0,0){10}
\psline(q1)(q2)(q3)(q1)
\drawwhitedot{q1}\drawwhitedot{q2}\drawwhitedot{q3}
\end{picture}}
\begin{document}

\author{Philippe BIANE}
\address{Institut Gaspard-Monge, universit\'e Paris-Est Marne-la-Vall\'ee,
5 Boulevard Descartes, Champs-sur-Marne, 77454, Marne-la-Vall\'ee cedex 2,
France}
\email{biane@univ-mlv.fr}
\urladdr{monge.univ-mlv.fr/\textasciitilde biane/}

\author{Patrick DEHORNOY}
\address{Laboratoire de Math\'ematiques Nicolas Oresme UMR 6139, Universit\'e de Caen, 14032~Caen, France}
\email{dehornoy@math.unicaen.fr}
\urladdr{www.math.unicaen.fr/\textasciitilde dehornoy}

\title[Dual braids and free cumulants]{Dual Garside structure of braids and free cumulants of products}

\keywords{braid group; dual braid monoid; Garside normal form; generating function; free cumulants; independent random variables; noncrossing partitions}

\subjclass{20F36, 05A17, 46L54}

\begin{abstract}
We count the $\nn$-strand braids whose normal decomposition has length at most~$2$ in the dual braid monoid~$\BKL\nn$ by reducing the question to a computation of free cumulants for a product of independent variables, for which we establish a general formula.
\end{abstract}

\maketitle

\section{Introduction}

A Garside structure on a group~$\GG$ is a generating family~$\SS$ that gives rise to distinguished decompositions of a particular type, leading in good cases to an automatic structure on~$\GG$ and, from there, to solutions of the word and conjugacy problems of~$\GG$~\cite{Garside}. In the case of the $\nn$-strand Artin braid group~$\BR\nn$, two Garside structures are known: the so-called classical Garside structure, in which the distinguished generating family is a copy of the symmetric group~$\mathfrak{S}_\nn$~\cite[Chapter~9]{Eps}, and the so-called \emph{dual Garside structure}, in which the distinguished generating family is a copy of the family~$\NC\nn$ of all size~$\nn$ noncrossing partitions~\cite{BDM}.

Whenever a finite Garside structure~$\SS$ is given on a group~$\GG$, natural counting problems arise, namely counting how many elements of the group~$\GG$ or of the submonoid generated by~$\SS$ have length (at most)~$\ell$ with respect to~$\SS$. Call $\SS$-normal the distinguished decompositions associated with a Garside structure~$\SS$. As $\SS$-normal sequences happen to be geodesic, the above question amounts to counting $\SS$-normal sequences of length~$\ell$. And, because $\SS$-normality is a purely local property, the central question is to determine $\SS$-normal sequences of length~two, the general case then corresponding to taking the $\ell$th power of the incidence matrix associated with length two.

Initially motivated by the investigation of the logical strength of certain statements involving the standard braid ordering~\cite{Dhq}, the above mentioned counting questions in the case of the classical Garside structure of braids have been addressed in~\cite{Dhi}, leading to nontrivial results involving Solomon's descent algebra and to natural conjectures, like the one established in~\cite{HNT} using the theory of quasi-symmetric functions.

The aim of this paper is to address similar questions in the case of the dual Garside structure of braids and to obtain an explicit determination of the generating function for the number of normal sequences of length two  (here $\Cat\nn$ denotes the $\nn$th Catalan number):

\begin{prop}
\label{P:Dual}
Let $\bbs\nn2$ be the number of braids of length at most~$2$ in the dual braid monoid~$\BKL\nn$. Then the function~$R(\zz) = 1 + \sum_{\nn\ge1} \bbs\nn2 \zz^\nn$ is connected with $M(\zz) = 1 + \sum_{\nn\ge1} \Cat\nn^2 \zz^\nn$ by the equality
\begin{equation}
\label{E:Dual}
R(\zz M(\zz)) = M(\zz).
\end{equation}
\end{prop}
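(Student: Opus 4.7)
The plan is to recognise the functional equation~\eqref{E:Dual} as a disguised free moment--cumulant relation. A standard Lagrange-inversion argument shows that two formal power series $A(z) = 1 + \sum_{\nn \ge 1} \alpha_\nn z^\nn$ and $B(z) = 1 + \sum_{\nn \ge 1} \beta_\nn z^\nn$ satisfy $A(zB(z)) = B(z)$ if and only if
\begin{equation*}
\beta_\nn = \sum_{\pi \in \NC\nn} \prod_{V \in \pi} \alpha_{|V|}
\end{equation*}
for every $\nn \ge 1$, which is the combinatorial form of Speicher's formula expressing moments as sums, over noncrossing partitions, of products of free cumulants. Hence~\eqref{E:Dual} is equivalent to the combinatorial identity
\begin{equation*}
\Cat\nn^2 = \sum_{\pi \in \NC\nn} \prod_{V \in \pi} \bbs{|V|}{2}
\end{equation*}
for every~$\nn \ge 1$, and it is that identity I would aim to prove.

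The next step is a combinatorial description of dual $S^*$-normality. Simple elements of~$\BKL\nn$ are in bijection with~$\NC\nn$, so pairs of simples are indexed by $\NC\nn \times \NC\nn$, which matches the $\Cat\nn^2$ count on the left. In any Garside monoid a pair $(s_1, s_2)$ of simples is normal iff $\partial(s_1) \wedge s_2 = 1$ in the lattice of simples; in the dual structure, the Garside complement coincides with the Kreweras complement $K$ on noncrossing partitions, so normality translates to $K(\pi_1) \wedge \pi_2 = \hat{0}$ in $\NC\nn$, and $\bbs\nn2$ counts the pairs $(\pi_1, \pi_2) \in \NC\nn \times \NC\nn$ satisfying this meet-zero condition.

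The heart of the proof is then to construct a bijection between $\NC\nn \times \NC\nn$ and pairs $(\sigma, (p_V)_{V \in \sigma})$ with $\sigma \in \NC\nn$ and each $p_V$ a normal pair of noncrossing partitions of the block~$V$. The natural candidate is to define $\sigma$ as the coarsest noncrossing partition such that the restriction of $(\pi_1, \pi_2)$ to each of its blocks is normal; those restrictions then provide the $p_V$. The main obstacle is precisely this step: canonically constructing $\sigma$ from $(\pi_1, \pi_2)$, verifying that each restriction is genuinely normal on its block, and checking uniqueness --- all of which require a careful analysis of how the Kreweras complement and the lattice meet behave under restriction and coarsening in the noncrossing partition lattice. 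A structural alternative, strongly suggested by the paper's title, is instead to establish a general formula for the free cumulants of a product $XY$ of two classically independent random variables in terms of the moments of $X$ and $Y$, then apply it with $X, Y$ having Catalan moments so that $XY$ has moments $\Cat\nn^2$; identifying the resulting cumulants with $\bbs\nn2$ via the normality criterion above then yields~\eqref{E:Dual} directly.
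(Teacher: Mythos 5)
Your reduction of \eqref{E:Dual} to the identity $\Cat\nn^2=\sum_{\pi\in\NC\nn}\prod_{V\in\pi}\bbs{|V|}{2}$ is exactly the moment--cumulant relation the paper uses, and your translation of dual normality into the condition $\overline{\pi_1}\wedge\pi_2={\bf0}_\nn$ is also the paper's (Proposition~\ref{P:Dual3}). The one idea you are missing is the step that dissolves the obstacle you flag. Do not try to build the coarsening $\sigma$ directly from the normality condition: the Kreweras complement does not commute with restriction to blocks, so that route is genuinely painful. Instead, first note that the Kreweras complement is a bijection of $\NC\nn$, so the number of pairs with $\overline{\pi_1}\wedge\pi_2={\bf0}_\nn$ equals the number of pairs with $\pi_1\wedge\pi_2={\bf0}_\nn$; and since complementation is a lattice anti-automorphism, this in turn equals the number of pairs with $\pi_1\vee\pi_2={\bf1}_\nn$. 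Once the condition is in join form, the canonical $\sigma$ is simply $\sigma=\pi_1\vee\pi_2$: restricting $\pi_1,\pi_2$ to a block $V$ of $\sigma$ gives two noncrossing partitions of $V$ whose join is the full block, and this tautologically yields the bijection you want. That is precisely the decomposition carried out in the paper's proof of Theorem~\ref{T:CumProd}, of which your ``structural alternative'' is the intended packaging: the paper proves the product formula for free cumulants of independent commuting variables (in terms of the cumulants, not the moments, of the factors), specializes via Corollary~\ref{C:Cumulants} to $X_1^2X_2^2$ with $X_1,X_2$ standard semicircular, so that all factor cumulants equal $1$, the sum counts pairs with full join, and the moments are $\Cat\nn^2$; then \eqref{MRbis} gives \eqref{E:Dual}. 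So your second route is the paper's proof, and your first route is correct in outline but needs the join-form conversion to close the gap you identified.
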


This formula, which inductively determines the numbers~$\bbs\nn2$, will be deduced from a general formula for computing the free cumulants of a product of independent random variables.

Free cumulants where invented by Roland Speicher in order to compute with free random variables~\cite{NiSp}. In particular, free cumulants give a simple way of computing the free additive convolution of two probability measures on the real line, namely the free cumulants of the sum of two free random variables are the sums of the free cumulants of the variables. Free cumulants also appear when one tries to compute the free cumulants of a product of free random variables, although in a more complicated way. 

Here is the general formula we establish (the notations are standard and explained in section~\ref{S:Cumulants}):

\begin{theorem}
\label{T:CumProd}
Let $X_1\wdots X_k$ be a family of commuting independent random variables, and let $R^{(i)}_n$ be the free cumulants of $X_i$. Then the free cumulants of the product $X_1X_2\pdots X_k$ are given by\begin{equation}
R_n=\sum_{\pi_1\vee\pdots\vee\pi_k={\bf 1}_n}\prod_iR^{(i)}_{\pi_i},
\end{equation}
the sum being over all $k$-tuples of noncrosssing partitions in $\NC\nn$ whose join is the largest partition.
\end{theorem}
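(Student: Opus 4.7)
The plan is to reduce the formula to Möbius inversion on the lattice $\NC\nn$ of noncrossing partitions. Recall that, for any random variable~$Y$ with moments~$m_n$ and free cumulants~$R_n$, the moment-cumulant formula $m_n=\sum_{\pi\in\NC\nn}R_\pi$, with $R_\pi:=\prod_{B\in\pi}R_{|B|}$, extends multiplicatively to
\[
m_\sigma(Y)=\sum_{\tau\le\sigma}R_\tau(Y),\qquad m_\sigma(Y):=\prod_{B\in\sigma}m_{|B|}(Y),
\]
for every $\sigma\in\NC\nn$. This relation is triangular with respect to refinement, hence invertible in either direction.

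I would then compute $m_\sigma(X_1X_2\pdots X_k)$ in two ways. On the one hand, applying the moment-cumulant formula to the product variable gives $m_\sigma(X_1\pdots X_k)=\sum_{\tau\le\sigma}R_\tau$, the $R_\tau$ being built from the cumulants~$R_n$ we wish to identify. On the other hand, classical independence together with commutativity yields $m_\sigma(X_1\pdots X_k)=\prod_i m_\sigma^{(i)}$, and expanding each factor via moment-cumulant produces
\[
m_\sigma(X_1\pdots X_k)=\sum_{\pi_1\le\sigma,\,\ldots,\,\pi_k\le\sigma}\,\prod_i R^{(i)}_{\pi_i}.
\]

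Next I would reorganize this last sum according to the value of $\tau:=\pi_1\vee\pdots\vee\pi_k$, computed in the lattice $\NC\nn$. Since $\tau$ is by definition the smallest noncrossing partition above all the $\pi_i$, the condition ``$\pi_i\le\sigma$ for every~$i$'' is equivalent to ``$\tau\le\sigma$'', and the sum rewrites as $\sum_{\tau\le\sigma}f(\tau)$ with
\[
f(\tau):=\sum_{\pi_1\vee\pdots\vee\pi_k=\tau}\prod_i R^{(i)}_{\pi_i}.
\]
Thus $\tau\mapsto R_\tau$ and $\tau\mapsto f(\tau)$ are two functions on $\NC\nn$ with the same sum-transform $\sigma\mapsto m_\sigma(X_1\pdots X_k)$. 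Möbius inversion on $\NC\nn$ forces $R_\tau=f(\tau)$ for every~$\tau$, and specializing to $\tau={\bf 1}_n$ yields the announced formula.

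The main obstacle is the reorganization step: one must work with the join in~$\NC\nn$ rather than in the full partition lattice, since the two can differ (the set-theoretic join of noncrossing partitions need not be noncrossing). Once the join is correctly interpreted inside $\NC\nn$, the universal property of joins gives the equivalence ``$\pi_i\le\sigma$ for all~$i$'' $\Leftrightarrow$ ``$\pi_1\vee\pdots\vee\pi_k\le\sigma$'' needed above, and the remainder of the argument is just the triangular bookkeeping of Möbius inversion.
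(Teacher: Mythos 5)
Your argument is correct and follows essentially the same route as the paper: factor the moments of the product using classical independence, expand each factor by the moment--cumulant formula, regroup the resulting $k$-fold sum according to the join $\pi_1\vee\pdots\vee\pi_k$ in $\NC\nn$, and invert the moment--cumulant relation. The only difference is one of packaging: the paper works at $\sigma={\bf 1}_\nn$ only and identifies the inner sum as a multiplicative function $Q_\pi$ by restricting the $\pi_i$ to the blocks of the join, whereas you carry the multiplicative extension $m_\sigma=\sum_{\tau\le\sigma}R_\tau$ through all $\sigma$ and finish with an explicit M\"obius inversion; the same canonical factorization of the interval $[{\bf 0}_\nn,\sigma]$ underlies both versions.
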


Free cumulants are specially designed to deal with highly noncommuting objects, therefore it may come as a surprise that there is also a simple formula for computing free cumulants of a product of independent commuting random variables, in terms of the free cumulants of the factors. Actually the formula also holds, with appropriate modifications, for classical and Boolean cumulants.

The organization of the paper is as follows. In Section~\ref{S:Dual}, we recall the description of the dual Garside structure of braid groups and raise the induced counting questions. In Section~\ref{S:Cumulants}, we review basic definitions about free cumulants and establish Theorem~\ref{T:CumProd}. This part can be read independently of the rest of the paper. In Section~\ref{S:Back}, we apply the result of Section~\ref{S:Cumulants} to braids and conclude with further questions and one additional result about the determinant of the involved incidence matrix.

\section{The dual Garside structure of braids}
\label{S:Dual}

\subsection*{Braid groups}

For $\nn \ge 1$, the $\nn$-strand braid group~$B_\nn$ is the group defined by the presentation
\begin{equation} 
\label{E:Present}
B_\nn = \left<\sig1 \wdots  \sig{\nn-1} \,\bigg\vert\,
\begin{array}{cl}
\sig i \sig j = \sig j \sig i 
&\text{\quad for $\vert i - j\vert \ge 2$}\\ \   \sig i \sig j \sig i  = \sig j \sig i \sig j
&\text{\quad for $\vert i - j\vert = 1$}
\end{array} \right>.
\end{equation}
The group~$\BR\nn$ is both the group of isotopy classes of $\nn$-strand geometric braids, the mapping class group of an $\nn$-punctured disk, and the fundamental group of the configuration space obtained by letting the symmetric group act on the complement of the diagonal hyperplanes in~$\CCCC^\nn$~\cite{Bir}.

\subsection*{Garside structures}

A \emph{Garside structure} in a group~$\GG$ is a subset~$\SS$ of~$\GG$ such that every element of~$\GG$ admits a decomposition of a certain syntactic form in terms of the elements of~$\SS$, namely a symmetric $\SS$-normal decomposition in the following sense.

\begin{defi}\cite{Garside}
Assume that $\GG$ is a group and $\SS$ is included in~$\GG$. 

\ITEM1 A finite $\SS$-sequence $(\ss_1 \wdots \ss_\dd)$ is called \emph{$\SS$-normal} if, for~$\ii < \dd$, every element of~$\SS$ left-dividing $\ss_{\ii}\ss_{\ii+1}$ left-divides~$\ss_{\ii}$, where ``$\ff$ left-divides~$\gg$'' means ``$\ff\inv \gg$ lies in the submonoid~$\widehat\SS$ of~$\GG$ generated by~$\SS$''.

\ITEM2 A pair of finite $\SS$-sequences $((\ss_1 \wdots \ss_\dd), (\tt_1 \wdots \tt_\ee))$ is called \emph{symmetric $\SS$-normal} if $(\ss_1 \wdots \ss_\dd)$ and $(\tt_1 \wdots \tt_\ee)$ are $\SS$-normal and, in addition, the only element of~$\SS$ left-dividing~$\ss_1$ and $\tt_1$ is~$1$.

\ITEM3 We say that $(\ss_1 \wdots \ss_\dd)$ (\resp $((\ss_1 \wdots \ss_\dd), (\tt_1 \wdots \tt_\ee))$) is a \emph{decomposition} for an element~$\gg$ of~$\GG$ if $\gg = \ss_1 \pdots \ss_\dd$ (\resp $\gg = \tt_{\ee}\inv \pdots \tt_1\inv \ss_1 \pdots \ss_{\dd}$) holds in~$\GG$.
\end{defi}

Every group~$\GG$ is trivially a Garside structure in itself, and the notion is interesting only when $\SS$ is small, typically when $\GG$ is infinite and $\SS$ is finite or, at least, is properly included in~$\GG$. Under mild assumptions, the existence of a finite Garside structure implies good properties for the group~$\GG$ such as the existence of an automatic structure or the decidability of the word and conjugacy problems.

Whenever $\SS$ is a finite generating family in a group~$\GG$, it is natural to consider the numbers
$$\NN\SS\dd = \card\{\gg \in \GG \mid \LGG\gg\SS = \dd\},$$
where $\LGG\gg\SS$ refers to the minimal length of an $\SS$-decomposition.
In the case of a Garside structure, symmetric $\SS$-normal decompositions are (essentially) unique, and they are geodesic. Therefore, $\NN\SS\dd$ identifies with the number of length~$\dd$ symmetric $\SS$-normal sequences. In such a context, the submonoid~$\SSh$ of~$\GG$ generated by~$\SS$ is the family of all elements of~$\GG$ whose symmetric $\SS$-normal decomposition has an empty denominator, that is, of all elements that admit an $\SS$-normal decomposition. Then, it is also natural to introduce the number
$$\NNp\SS\dd = \card\{\gg \in \SSh \mid \LGG\gg\SS = \dd\},$$
which, by the above remark, is the number of $\SS$-normal sequences of length~$\dd$. It then follows from the definition that a sequence $(\ss_1 \wdots \ss_\dd)$ is $\SS$-normal if and only if every length two subsequence is $\SS$-normal, and the basic question is therefore to investigate the numbers
\begin{equation}
\label{E:General}
\NNp\SS2 = \card\{(\ss_1, \ss_2) \in \SS^2 \mid (\ss_1, \ss_2) \mbox{ is $\SS$-normal}\}.
\end{equation}

\subsection*{The classical Garside structure of~$\BR\nn$}

In the case of the braid group~$\BR\nn$, two Garside structures are known. The first one, often called \emph{classical}, involves permutations. By~\eqref{E:Present}, mapping~$\sig\ii$ to the transposition $(\ii, \ii{+}1)$ induces a surjective homomorphism~$\mathrm{pr}_\nn: \BR\nn \to \Sym\nn$. Considering the positive braid diagrams in which any two strands cross at most once (``simple braids'') provides a set-theoretic section for~$\mathrm{pr}_\nn$, whose image is a copy~$\SS_\nn$ of~$\Sym\nn$ inside~$\BR\nn$. The family~$\SS_\nn$ is a Garside family in~$\BR\nn$~\cite[Chapter~9]{Eps}, the submonoid of~$\BR\nn$ generated by~$\SS_\nn$ being the submonoid~$\BP\nn$ generated by~$\sig1 \wdots \sig{\nn-1}$~\cite{Gar}. The associated numbers~$\NNp{\SS_\nn}\dd$ have been investigated in~\cite{Dhi}. In particular, writing~$\bb\nn\dd$ for~$\NNp{\SS_\nn}\dd$, it is shown that the numbers~$\bb\nn2$ are determined by the induction
\begin{equation}
\bb\nn2 = \sum_{\ii=0}^{\nn-1} (-1)^{\nn + \ii +1} {\nn \choose \ii}^2 \bb\ii2
\end{equation}
and that the double exponential series $\sum \bb\nn2 \zz^\nn/(\nn!)^2$ is the inverse of the Bessel function~$J_0(\sqrt\zz)$.

\subsection*{The dual Garside structure of~$\BR\nn$}

It is known since~\cite{BKL} that, for every~$\nn$, there exists an alternative Garside structure on~$B_\nn$, namely the \emph{dual} Garside structure~$\SS_\nn^*$, whose elements are in one-to-one correspondence with the noncrossing partitions of~$\{1, 2 \wdots \nn\}$. The question then naturally arises of determining the derived numbers $\NN{\SS_\nn^*}\dd$ and $\NNp{\SS_\nn^*}\dd$. Here we shall concentrate on the latter, hereafter denoted~$\bbs\nn\dd$, and specifically on~$\bbs\nn2$. By~\eqref{E:General}, we have
\begin{equation}
\label{E:Dual}
\bbs\nn\dd = \card\{(\ss_1 \wdots \ss_\dd) \in (\SS_\nn^*)^\dd \mid \forall\ii< \nn \, ((\ss_\ii , \ss_{\ii+1}) \mbox{ is $\SS_\nn^*$-normal)}\}.
\end{equation}

In order to compute the numbers~$\bbs\nn\dd$, we shall describe the correspondence between~$\SS_\nn^*$ and noncrossing partitions and interpret the $\SS_\nn^*$-normality condition in terms of the latter.

We recall that a set partition of the set $\{1 \wdots n\}$ is called noncrossing if there is no quadruple $1\le i<j<k<l\le n$ such that $i$ and $k$ belong to some part of the partition, whereas $j$ and $l$ belong to another part. The set of noncrossing partitions of $\{1 \wdots n\}$, denoted by $\NC\nn$, is a poset for the reverse refinement order: for two  partitions $\pi$ and $\pi'$ one has $\pi\le \pi'$ if and only if each part of $\pi$ is included in some part of $\pi'$. With this order, $\NC\nn$ is a lattice, with largest element ${\bf 1}_n$ the partition with only one part, and smallest element ${\bf 0}_n$, the partition into $n$ singletons. We will denote by $\pi_1 \vee \pdots\vee\pi_\dd$ the join of a family of noncrossing partitions.

\begin{defi}\cite{BKL}
\label{D:FRGen}
For $1 \le \ii < \jj$, put $\aaa\ii\jj = \sig\ii ... \sig{\jj-2} \, \sig{\jj-1} \, \siginv{\jj-2} ... \siginv\ii$ in~$\BR\nn$. The \emph{dual braid monoid}~$\BKL\nn$ is the submonoid of~$\BR\nn$ generated by all elements~$\aaa\ii\jj$, and $\SS_\nn^*$ is the family of all left-divisors of~$\Deltad_\nn$ in~$\BKL\nn$, with $\Deltad_\nn = \sig1\sig2 \pdots \sig{\nn-1}$.
\end{defi}

Note that $\sig\ii = \aaa\ii{\ii+1}$ holds for every~$\ii$, hence $\BKL\nn$ includes~$\BP\nn$, a proper inclusion for $\nn \ge 3$. 

\begin{prop}\cite{BKL}
The family~$\SS^*_\nn$ is a Garside structure on the group~$\BR\nn$.
\end{prop}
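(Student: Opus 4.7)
The plan is to verify the defining axioms of a Garside structure (as in Definition~\ref{D:FRGen}'s framework) for the pair $(\BR\nn,\SS^*_\nn)$, essentially by reproducing the Birman--Ko--Lee argument from~\cite{BKL}. I would proceed in four steps.

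\textbf{Step 1: The dual monoid.} First I would give $\BKL\nn$ a presentation on the generators $\aaa\ii\jj$ with the ``dual relations'' $\aaa\ii\jj \aaa\jj\kk = \aaa\ii\kk \aaa\ii\jj = \aaa\jj\kk \aaa\ii\kk$ for $\ii<\jj<\kk$, and $\aaa\ii\jj \aaa\kk\ell = \aaa\kk\ell \aaa\ii\jj$ whenever the intervals $[\ii,\jj]$ and $[\kk,\ell]$ are unlinked (i.e.\ disjoint or nested). These relations preserve length, so $\BKL\nn$ is graded by word length, giving a well-defined notion of ``degree''. Using standard Garside-theoretic machinery (a word reversing / diamond lemma argument on this presentation) one checks that $\BKL\nn$ is left and right cancellative and that any two elements sharing a common right (resp.\ left) multiple admit a least common one; this is the lattice property for left and right divisibility.

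\textbf{Step 2: The Garside element.} Next I would show that $\Deltad_\nn = \sig1\sig2\pdots\sig{\nn-1}$ is a \emph{balanced} element: its set of left divisors in~$\BKL\nn$ equals its set of right divisors. The natural way is to exhibit a bijection between $\SS^*_\nn$ and $\NC\nn$, sending a divisor to the noncrossing partition whose blocks record how the strands are grouped by the permutation image under $\mathrm{pr}_\nn$, together with the compatible planar geometry. Checking that every $\aaa\ii\jj$ divides $\Deltad_\nn$ and that conjugation by $\Deltad_\nn$ permutes the generators $\aaa\ii\jj$ then gives balancedness, and the bijection with $\NC\nn$ shows $\card\SS^*_\nn = \Cat\nn$ is finite.

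\textbf{Step 3: Normal form in the monoid.} With $(\BKL\nn,\Deltad_\nn)$ cancellative with lattice structure and a finite set of simple elements, the classical greedy algorithm applies: for $\gg\in\BKL\nn$, set $\ss_1$ to be the $\gcd$ in $\SS^*_\nn$ of $\gg$ and $\Deltad_\nn$ (equivalently, the maximal element of $\SS^*_\nn$ left-dividing $\gg$), then iterate on $\ss_1\inv \gg$. The resulting sequence $(\ss_1\wdots\ss_\dd)$ is $\SS^*_\nn$-normal in the sense of Definition~\ref{D:FRGen}\ITEM1 because, by construction, any element of $\SS^*_\nn$ left-dividing $\ss_\ii\ss_{\ii+1}$ is absorbed into the greedy choice of $\ss_\ii$.

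\textbf{Step 4: From the monoid to the group.} Finally I would verify that $\BR\nn$ is the group of fractions of~$\BKL\nn$. Ore's theorem applies because $\BKL\nn$ is cancellative and any two elements have a common multiple (a power of $\Deltad_\nn$ works, since $\Deltad_\nn$ is a right and left multiple of every simple element, by balancedness). It remains to identify this group of fractions with $\BR\nn$: the relations $\aaa\ii\jj = \sig\ii\pdots\sig{\jj-1}\siginv{\jj-2}\pdots\siginv\ii$ send the dual presentation into~$\BR\nn$, and, conversely, the Artin generators $\sig\ii=\aaa\ii{\ii+1}$ lie in $\BKL\nn$ and satisfy the braid relations as consequences of the dual relations; so the two presentations define the same group. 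Combining with Step~3, every element $\gg\in\BR\nn$ writes as $\tt\inv \uu$ with $\tt,\uu\in\BKL\nn$, and taking greedy normal forms of $\tt$ and $\uu$ after removing their common left factor in $\SS^*_\nn$ yields the required symmetric $\SS^*_\nn$-normal decomposition of Definition~\ref{D:FRGen}\ITEM2--\ITEM3.

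The main obstacle is Step~1, namely establishing cancellativity and the lattice property for $\BKL\nn$ directly from the dual relations; once this is in hand, Steps~2--4 follow the general Garside-theoretic template, with the noncrossing-partition combinatorics simply labelling the simple elements.
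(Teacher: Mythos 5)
Your outline faithfully reproduces the Birman--Ko--Lee/Bessis--Digne--Michel argument (dual presentation, cancellativity and lattice property, the balanced element $\Deltad_\nn$ with divisors indexed by $\NC\nn$, greedy normal form, Ore localization identifying the group of fractions with $\BR\nn$), and the triangle relations you state are the correct ones for the paper's convention on $\aaa\ii\jj$. The paper itself gives no proof of this proposition --- it is quoted from~\cite{BKL} --- so your sketch is essentially the same approach as the cited source, with the genuinely hard step (cancellativity and the lattice property from the dual presentation) correctly identified but, as you acknowledge, deferred to the standard word-reversing machinery.
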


\noindent\begin{minipage}{\textwidth}
\rightskip40mm\VR(4,0)\hspace{8pt} 
It is convenient to associate with the $\nn$-strand braid~$\aaa\ii\jj$ a graphical representation as the chord~$(\ii, \jj)$ in a disk with $\nn$~marked vertices on the border as shown on the right.
\hfill\begin{picture}(0,0)(-20,-8)
\psset{unit=0.6mm, linewidth=.8pt, ncurv=1}\SpecialCoor\degrees[8]
\pnode(10;2){q1}\pnode(10;1){q2}\pnode(10;0){q3}\pnode(10;7){q4}
\pnode(10;6){q5}\pnode(10;5){q6}\pnode(10;4){q7}\pnode(10;3){q8}
\pscircle[dimen=middle](0,0){10}
\psline[fillstyle=solid,fillcolor=gray](q3)(q6)
\drawwhitedot{q1}\drawwhitedot{q2}\drawwhitedot{q3}\drawwhitedot{q4}
\drawwhitedot{q5}\drawwhitedot{q6}\drawwhitedot{q7}\drawwhitedot{q8}
\psset{unit=0.8mm}
\pnode(10;2){q1}\pnode(10;1){q2}\pnode(10;0){q3}\pnode(10;7){q4}
\pnode(10;6){q5}\pnode(10;5){q6}\pnode(10;4){q7}\pnode(10;3){q8}
\rput(q1){$\scriptstyle1$}\rput(q3){$\scriptstyle\ii$}\rput(q6){$\scriptstyle\jj$}\rput(q8){$\scriptstyle\nn$}
\end{picture}\\
\end{minipage}

Then the correspondence between the elements of~$\SS_\nn^*$ and noncrossing partitions stems from the following observation:

\begin{lemm}\cite{BDM}
\label{L:Order}
For $\PP$ a union of disjoint polygons in the $\nn$-marked disk, say $\PP = \PP_1 \cup \pdots \cup \PP_\dd$, let $\aa\PP = \aa{\PP_1} \pdots \aa{\PP_\dd}$, with $\aa{\PP_\kk} = \aaa{\ii_1}{\ii_2} \aaa{\ii_2}{\ii_3} \pdots \aaa{\ii_{\nn_\kk-1}}{\ii_{\nn_\kk}}$ where $(\ii_1 \wdots \ii_{\nn_\kk})$ is a clockwise enumeration of the vertices of~$\PP_\kk$. 

\ITEM1 The braid~$\aa\PP$ only depends on~$\PP$ and not on the order of enumeration.

\ITEM2 Mapping~$\PP$ to~$\aa\PP$ establishes a bijection between unions of disjoint polygons in the $\nn$-marked disk and elements of~$\SS_\nn^*$.
\end{lemm}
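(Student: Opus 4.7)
The plan is to leverage the presentation of the dual braid monoid from \cite{BKL}, whose relations come in two flavours: the \emph{triangle relations} $\aaa\ii\jj\aaa\jj\kk=\aaa\jj\kk\aaa\ii\kk=\aaa\ii\kk\aaa\ii\jj$ for $1\le\ii<\jj<\kk\le\nn$, and the \emph{commutation relations} $\aaa\ii\jj\aaa\kk\ell=\aaa\kk\ell\aaa\ii\jj$ whenever the chords $\{\ii,\jj\}$ and $\{\kk,\ell\}$ are noncrossing and have disjoint endpoints. Every step below will reduce to an invocation of one of these two families.

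For part (i), I would first establish invariance under rotating the starting vertex within a single polygon~$\PP_\kk$. For a triangle this is literally one of the triangle relations. For a polygon of size $\mm\ge 4$ with vertices clockwise-enumerated as $(\ii_1,\ldots,\ii_\mm)$, I would apply a triangle relation to the prefix $\aaa{\ii_1}{\ii_2}\aaa{\ii_2}{\ii_3}$ and rewrite it as $\aaa{\ii_1}{\ii_3}\aaa{\ii_1}{\ii_2}$ via the diagonal $\{\ii_1,\ii_3\}$. Since $\{\ii_1,\ii_2\}$ is endpoint-disjoint and noncrossing with each of $\{\ii_3,\ii_4\},\ldots,\{\ii_{\mm-1},\ii_\mm\}$ (all on the opposite side of that diagonal), I could commute $\aaa{\ii_1}{\ii_2}$ all the way to the right and then apply the inductive hypothesis to the $(\mm{-}1)$-gon $(\ii_1,\ii_3,\ldots,\ii_\mm)$ to recognise the rotated product. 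Next, invariance under the ordering of the polygons $\PP_1,\ldots,\PP_\dd$ is immediate: two disjoint polygons have pairwise noncrossing edges with disjoint endpoints, so $\aa{\PP_\kk}$ and $\aa{\PP_{\kk'}}$ commute factor-by-factor.

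For part (ii), the strategy is a cardinality-plus-injectivity argument. Unions of disjoint polygons in the $\nn$-marked disk (with singletons regarded as degenerate one-vertex ``polygons'' contributing the empty product) are in obvious bijection with noncrossing partitions of $\{1,\ldots,\nn\}$, so the domain has size $\Cat\nn$. The codomain $\SS_\nn^*$ also has size $\Cat\nn$ by \cite{BKL}. To get injectivity I would project to the symmetric group: since $\mathrm{pr}_\nn(\aaa\ii\jj)=(\ii,\jj)$, the image $\mathrm{pr}_\nn(\aa{\PP_\kk})$ is the cycle $(\ii_1,\ii_2,\ldots,\ii_{\nn_\kk})$ obtained by reading the vertices of $\PP_\kk$ clockwise, so $\mathrm{pr}_\nn(\aa\PP)$ is the product of these disjoint cycles, from which the partition $\PP$ is unambiguously recovered. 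A separate verification, reducing to the case $\PP^*=\{1,\ldots,\nn\}$ (where $\aa{\PP^*}=\sig1\sig2\pdots\sig{\nn-1}=\Deltad_\nn$) and then factoring $\Deltad_\nn$ as $\aa\PP\cdot\aa{\PP'}$ for a canonical ``complementary'' union of polygons $\PP'$, confirms that each $\aa\PP$ is indeed a left-divisor of $\Deltad_\nn$ and therefore an element of $\SS_\nn^*$.

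The main obstacle will be the triangulation induction in part (i): one has to check carefully that a single step of cyclic rotation of the vertex list corresponds, after one triangle move and several commutations, to a rotation of the smaller polygon obtained by contracting the diagonal. Once that bookkeeping is set up, the commuting-polygons step and the bijectivity argument in (ii) are essentially formal.
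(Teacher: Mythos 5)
The paper does not actually prove this lemma: it is quoted from~\cite{BDM}, so there is no in-paper argument to compare against, and your proposal has to be judged on its own terms. For part~\ITEM1 your plan is sound: the triangle relations $\aaa\ii\jj\aaa\jj\kk=\aaa\jj\kk\aaa\ii\kk=\aaa\ii\kk\aaa\ii\jj$ give rotation invariance for triangles, and the induction on the number of vertices (one triangle move to contract a diagonal, commutations to push the split-off edge past the edges lying on the other side of that diagonal, then the induction hypothesis for the $(\mm{-}1)$-gon and one more triangle move) does close up; independence of the ordering of the polygons $\PP_1\wdots\PP_\dd$ is indeed immediate from the commutation relations. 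This is the standard argument and the bookkeeping, while tedious, works.

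Part~\ITEM2 is where the genuine gap lies. Your argument is: the domain has $\Cat\nn$ elements, $\card\,\SS_\nn^*=\Cat\nn$ by~\cite{BKL}, and the map is injective because $\mathrm{pr}_\nn(\aa\PP)$ is a product of disjoint cycles from which $\PP$ is recovered. The injectivity step is fine, but the cardinality $\card\,\SS_\nn^*=\Cat\nn$ is obtained in~\cite{BKL} precisely by classifying the left-divisors of~$\Deltad_\nn$ as the products of ``parallel descending cycles'', which is statement~\ITEM2 itself; quoting that count makes the argument circular unless you supply an independent enumeration of the divisors of~$\Deltad_\nn$. Moreover, even granting the count, you must first show that every~$\aa\PP$ actually left-divides~$\Deltad_\nn$, since otherwise the map does not land in~$\SS_\nn^*$ and the counting argument proves nothing; you defer this to a ``separate verification'' via a factorization $\Deltad_\nn=\aa\PP\cdot\aa{\PP'}$ with $\PP'$ complementary, but that factorization (essentially the Kreweras complement, cf.\ Lemma~\ref{L:Complement}) is one of the two substantive points of the lemma, not a formality. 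A self-contained proof has to establish membership and surjectivity directly, for instance by showing that the set $\{\aa\PP\}$ contains~$1$, is closed under left division in~$\BKL\nn$, and is permuted by the complement map $\ss\mapsto\partial\ss$, as is done in~\cite{BDM}.
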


It is standard to define a bijection between noncrossing partitions of~$\{1 \wdots \nn\}$ and unions of disjoint polygons in the $\nn$-marked disk, yielding the announced correspondence.  Noncrossing partitions can be embedded into the symmetric group, using geodesics in the Cayley graph \cite{Bia}: this amounts to mapping a union of polygons to the product of the cycles obtained by enumerating their vertices in clockwise order. Note that, although noncrossing partitions may be viewed as particular permutations, the associated braids need not coincide: for instance, the cycle~$(1 3)$, which corresponds to the partition $\{\{1, 3\}, \{2\}\}$, is associated in~$\SS_3^*$ with the braid~$\aaa13$, that is, $\sig1\sig2\siginv1$, whereas it associated with~$\sig1\sig2\sig1$ in~$\SS_3$.

\noindent\begin{minipage}{\textwidth}
\rightskip40mm\VR(4,0)\hspace{8pt} 
For~$\pi$ a noncrossing partition of~$\{1 \wdots \nn\}$, we shall denote by~$\aa\pi$ the associated element of~$\SS_\nn^*$. For instance, for $\pi = \{\{1\}, \{2,8\}, \{3,5,6\}, \{4\}, \{7\}\}$, we find $\aa\pi = \aaa28 \aaa35 \aaa56$, as shown on the right.
\hfill\begin{picture}(0,0)(-20,-8)
\psset{unit=0.6mm, linewidth=.8pt, ncurv=1}\SpecialCoor\degrees[8]
\pnode(10;2){q1}\pnode(10;1){q2}\pnode(10;0){q3}\pnode(10;7){q4}
\pnode(10;6){q5}\pnode(10;5){q6}\pnode(10;4){q7}\pnode(10;3){q8}
\pscircle[dimen=middle](0,0){10}
\psline(q3)(q5)(q6)(q3)
\psline(q2)(q8)
\drawwhitedot{q1}\drawwhitedot{q2}\drawwhitedot{q3}\drawwhitedot{q4}
\drawwhitedot{q5}\drawwhitedot{q6}\drawwhitedot{q7}\drawwhitedot{q8}
\psset{unit=0.8mm}
\pnode(10;2){q1}\pnode(10;1){q2}\pnode(10;0){q3}\pnode(10;7){q4}
\pnode(10;6){q5}\pnode(10;5){q6}\pnode(10;4){q7}\pnode(10;3){q8}
\rput(q1){$\scriptstyle1$}\rput(q2){$\scriptstyle2$}\rput(q3){$\scriptstyle3$}\rput(q4){$\scriptstyle4$}\rput(q5){$\scriptstyle5$}\rput(q6){$\scriptstyle6$}\rput(q7){$\scriptstyle7$}\rput(q8){$\scriptstyle8$}
\end{picture}\\
\end{minipage}

Under the above correspondence, $\Deltad_\nn$ corresponds to the (unique) $\nn$-gon in the $\nn$-marked disk, hence to the (noncrossing) partition~${\bf1}_\nn$ with one part. Owing to~\eqref{E:Dual}, the numbers~$\bbs\nn\dd$ are determined by
\begin{equation}
\label{E:Dual2}
\bbs\nn\dd = \card\{(\pi_1 \wdots \pi_\dd) \in \NC\nn^\dd \mid \forall\ii< \nn \, ((\aa{\pi_\ii}, \aa{\pi_{\ii+1}}) \mbox{ is $\SS_\nn^*$-normal})\},
\end{equation}
and we have to recognize when the braids associated with two partitions make an $\SS_\nn^*$-normal sequence.

By construction, the Garside structure~$\SS_\nn^*$ is what is called bounded by the element~$\Deltad_\nn$, that is, it exactly consists of the left-divisors of~$\Deltad_\nn$ in~$\BKL\nn$. In this case, the normality condition takes a simple form.

\begin{lemm}\cite[Chapter~VI]{Garside}
Assume that $\SS$ is a Garside structure that is bounded by an element~$\Delta$ in a group~$\GG$. Then, for~$\ss, \tt$ in~$\SS$, the pair~$(\ss, \tt)$ is $\SS$-normal if and only if the only common left-divisor of~$\partial\ss$ and~$\tt$ in the monoid~$\SSh$ is~$1$, where $\partial\ss$ is the element satisfying $\ss \cdot \partial\ss = \Delta$.
\end{lemm}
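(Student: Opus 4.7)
The plan is to prove both implications directly, using two facts from Garside theory in the monoid $\widehat S$: that $S$ coincides with the family of left-divisors of $\Delta$ (this is what ``bounded by $\Delta$'' means), and that whenever two elements $a, b \in \widehat S$ admit a common right-multiple their left-lcm $a \vee_L b$ exists and satisfies $a\widehat S \cap b\widehat S = (a \vee_L b)\widehat S$.

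For the forward direction, suppose $(s,t)$ is $S$-normal and let $u \in \widehat S$ be a common left-divisor of $\partial s$ and $t$. Writing $\partial s = uv$ gives $\Delta = s\,\partial s = (su)v$, so $su$ left-divides $\Delta$ and hence lies in $S$. Since $u$ left-divides $t$, $su$ left-divides $st$, and $S$-normality forces $su$ to left-divide $s$. This makes $u$ left-invertible in $\widehat S$; as $\widehat S$ has only trivial units, $u = 1$.

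For the converse, I argue the contrapositive: assuming $(s,t)$ is not $S$-normal, I produce a nontrivial common left-divisor of $\partial s$ and $t$. Pick $r \in S$ left-dividing $st$ but not $s$, and set $d = \gcd_L(r,s)$ in $\widehat S$; writing $r = d\tilde r$ and $s = d\tilde s$, one has $\tilde r \neq 1$ and $\gcd_L(\tilde r, \tilde s) = 1$ by the standard reduction property of the gcd. Cancelling $d$ in ``$r$ left-divides $st$'' and in ``$r$ left-divides $\Delta = s\,\partial s$'' yields that $\tilde r$ left-divides both $\tilde s\, t$ and $\tilde s\,\partial s$. Pick an atom $\sigma$ left-dividing $\tilde r$; then $\sigma$ left-divides both $\tilde s\, t$ and $\tilde s\,\partial s$, while $\sigma$ cannot left-divide $\tilde s$ (else $\sigma$ would be a common left-divisor of $\tilde r$ and $\tilde s$, contradicting coprimality). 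Writing $\sigma \vee_L \tilde s = \tilde s \cdot \tilde s^*$, the intersection identity above forces $\tilde s^*$ to left-divide both $t$ and $\partial s$, with $\tilde s^* \neq 1$ because $\sigma$ does not left-divide $\tilde s$.

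The main obstacle is extracting the right element from the failure of normality in the converse direction; I handle this by first reducing to the coprime case via the gcd with $s$, and then using the left-lcm of an atom $\sigma$ with $\tilde s$ to manufacture a common left-divisor of $\partial s$ and $t$. Beyond this the argument relies only on standard lattice-theoretic properties of divisibility in $\widehat S$ guaranteed by the boundedness of the Garside structure.
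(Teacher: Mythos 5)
The paper does not actually prove this lemma: it is quoted from \cite[Chapter~VI]{Garside} as a known fact of Garside theory, so there is no internal proof to compare yours against. Judged on its own, your argument is correct and essentially the standard one. The forward direction is exactly the expected computation: a common left-divisor $u$ of $\partial\ss$ and $\tt$ yields $\ss u$ as a left-divisor of $\Delta$ (hence an element of $\SS$, by boundedness) that left-divides $\ss\tt$, so normality forces $\ss u$ to left-divide $\ss$ and cancellativity plus triviality of units gives $u=1$. The contrapositive of the converse is also sound: factoring out $d=\gcd_L(r,\ss)$, passing to the coprime pair $(\tilde r,\tilde\ss)$, and reading off the complement $\tilde\ss^*$ in the left-lcm $\sigma\vee_L\tilde\ss=\tilde\ss\,\tilde\ss^*$ does produce a nontrivial common left-divisor of $\tt$ and $\partial\ss$. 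Two small remarks. First, the detour through an atom $\sigma$ is unnecessary: since $\gcd_L(\tilde r,\tilde\ss)=1$ and $\tilde r\ne1$, the element $\tilde r$ itself does not left-divide $\tilde\ss$, and the same lcm argument applied to $\tilde r\vee_L\tilde\ss$ works verbatim without invoking atomicity (i.e.\ Noetherianity). Second, you are implicitly using several hypotheses that are not in the paper's minimal definition of a Garside structure --- left-cancellativity of $\SSh$, absence of nontrivial invertible elements, existence of left-gcds, and existence of conditional right-lcms with $a\SSh\cap b\SSh=(a\vee_L b)\SSh$; these are all part of the standard framework of \cite{Garside} and hold for the dual braid monoid, but it would be worth stating them up front since the cited statement is only as strong as these background axioms.
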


We are thus left with the question of recognizing in terms of noncrossing partitions (or of the corresponding unions of polygons) when a braid~$\aaa\ii\jj$ left-divides the braid~$\aa\pi$, and what is the partition~$\pi'$ satisfying $\aa{\pi'} = \partial\aa\pi$.

\begin{lemm}\cite{BDM}
\label{L:Complement}
Assume that $\pi$ lies in~$\NC\nn$. 

\ITEM1 For $1 \le \ii < \jj \le \nn$, the braid~$\aaa\ii\jj$ left-divides~$\aa\pi$ in~$\BKL\nn$ if and only if the chord~$(\ii, \jj)$ lies inside the union of the convex hulls of the polygons associated with~$\pi$.

\ITEM2 We have $\partial\aa\pi = \aa{\overline\pi}$, where $\overline\pi$ is the Kreweras complement of~$\pi$.
\end{lemm}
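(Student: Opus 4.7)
The plan is to exploit the Birman-Ko-Lee presentation of~$\BKL\nn$~\cite{BKL}, whose defining relations are of two kinds: the \emph{commutations} $\aaa\ii\jj \aaa\kk\ell = \aaa\kk\ell \aaa\ii\jj$ whenever the chords $(\ii,\jj)$ and $(\kk,\ell)$ are noncrossing or nested, and the \emph{triangle relations} $\aaa\ii\jj \aaa\jj\kk = \aaa\jj\kk \aaa\ii\kk = \aaa\ii\kk \aaa\ii\jj$ for $\ii<\jj<\kk$. Together with Lemma~\ref{L:Order}(i), these relations let me freely rearrange polygon products. The second ingredient is the lattice isomorphism from $(\NC\nn,\le)$ onto the poset of simple elements of~$\BKL\nn$ under left-divisibility, implicit in~\cite{BKL,BDM}.

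For \ITEM1 (``if'' direction), I would distinguish two cases. If $(\ii,\jj)$ is an edge of some polygon~$\PP_\kk$ of~$\pi$, Lemma~\ref{L:Order}(i) lets me start the clockwise enumeration of~$\PP_\kk$ at~$\ii$, so that $\aa{\PP_\kk}$ begins with the factor~$\aaa\ii\jj$; the other polygons of~$\pi$ use chords noncrossing with~$(\ii,\jj)$, hence commute with~$\aaa\ii\jj$, so $\aaa\ii\jj$ left-divides~$\aa\pi$. If $(\ii,\jj)$ is a diagonal of~$\PP_\kk$, it cuts~$\PP_\kk$ into two sub-polygons sharing that edge; I would proceed by induction on the number of vertices of~$\PP_\kk$, repeatedly applying the triangle relation to a triangle of~$\PP_\kk$ adjacent to~$(\ii,\jj)$ in order to pull~$\aaa\ii\jj$ to the front. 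For the ``only if'' direction, I invoke the lattice isomorphism: if $\ii$ and $\jj$ lie in distinct blocks of~$\pi$, then the partition~$\pi_{\ii,\jj}$ whose only nontrivial block is $\{\ii,\jj\}$ does not refine~$\pi$, and so $\aaa\ii\jj = \aa{\pi_{\ii,\jj}}$ cannot left-divide~$\aa\pi$.

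For \ITEM2, the classical identity $|\pi|+|\overline\pi|=\nn+1$ combined with Lemma~\ref{L:Order} (which gives $\LGG{\aa\pi}{\SS_\nn^*} = \nn-|\pi|$) yields
\begin{equation*}
\LGG{\aa\pi}{\SS_\nn^*} + \LGG{\aa{\overline\pi}}{\SS_\nn^*} = \nn-1 = \LGG{\Deltad_\nn}{\SS_\nn^*}.
\end{equation*}
Since $\pi\le{\bf 1}_\nn$, the lattice isomorphism together with~\ITEM1 shows that $\aa\pi$ left-divides $\Deltad_\nn$, so the right complement $(\aa\pi)\inv \Deltad_\nn$ is a well-defined simple element $\aa{\pi'}$ for some $\pi'\in\NC\nn$, and the length identity above forces $|\pi'|=|\overline\pi|$. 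To identify $\pi'=\overline\pi$, I would use~\ITEM1 to read off the chords left-dividing~$\aa{\pi'}$: these are exactly the chords not cancelled against a polygon of~$\pi$, i.e., those lying in the connected regions of the disk complementary to the polygons of~$\pi$, which is precisely the combinatorial description of the Kreweras complement.

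The main obstacle is the diagonal case of~\ITEM1, where the iterated use of the triangle relation demands a careful induction: one picks at each step a triangle of~$\PP_\kk$ adjacent to~$(\ii,\jj)$ and peels it off, reducing to a strictly smaller polygon that still contains~$(\ii,\jj)$ as an edge or diagonal. Once this is in hand, part~\ITEM2 follows from the length bookkeeping together with the combinatorial identification of~$\overline\pi$ via~\ITEM1.
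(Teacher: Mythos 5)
The paper gives no proof of this lemma: it is quoted from \cite{BDM} (building on \cite{BKL}), so your attempt can only be measured against the literature, not against an argument in the text. Your proof of the ``if'' direction of \ITEM1 is sound and is essentially the standard one: invariance of $\aa\PP$ under the choice of starting vertex, iterated triangle relations $\aaa\ii\jj\aaa\jj\kk=\aaa\ii\kk\aaa\ii\jj$ to pull $\aaa\ii\jj$ to the front when $(\ii,\jj)$ joins two vertices of one polygon, and commutation with the chords of the other polygons. Likewise the length bookkeeping in \ITEM2 is legitimate, since the Birman--Ko--Lee relations are homogeneous and $\BKL\nn$ is therefore graded by the number of atoms.

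The gap is the ``only if'' direction of \ITEM1. You derive it from ``the lattice isomorphism from $(\NC\nn,\le)$ onto the poset of simple elements under left-divisibility'', but that isomorphism \emph{is} the content of the lemma (statement \ITEM1 is exactly its restriction to atoms, and the general case is recovered from the atom case), so invoking it is circular: you cite the result you set out to prove. A self-contained argument exists and is short: the projection $\mathrm{pr}_\nn\colon\BR\nn\to\Sym\nn$ sends $\aaa\ii\jj$ to the transposition $(\ii\;\jj)$ and $\aa\pi$ to a product of cycles of reflection length $\nn-|\pi|$; if $\aaa\ii\jj$ left-divided $\aa\pi$ with $\ii,\jj$ in distinct blocks, the cofactor would have atom length $\nn-|\pi|-1$ by homogeneity, whereas $(\ii\;\jj)\inv\,\mathrm{pr}_\nn(\aa\pi)$ has reflection length $\nn-|\pi|+1$, because multiplying by a transposition whose entries lie in distinct cycles merges those cycles --- a contradiction. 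A milder version of the same issue affects the last step of \ITEM2: the equality $|\pi'|=|\overline\pi|$ only matches the number of blocks, and your identification of the atoms dividing $\aa{\pi'}$ with the chords ``in the complementary regions'' is asserted rather than proved; it needs the observation that $\aaa\ii\jj$ left-divides $\partial\aa\pi$ if and only if $\aa\pi\aaa\ii\jj$ is again simple, if and only if the chord $(\ii,\jj)$ crosses no polygon of~$\pi$, which is precisely Kreweras' description of~$\overline\pi$. Both points are repairable, but as written the proof assumes its conclusion at the decisive step.
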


\begin{figure}[htb]
\begin{picture}(25,27)(-10,-13)
\psset{unit=1.1mm, linewidth=.8pt, ncurv=1}\SpecialCoor\degrees[12]
\pnode(10;3){q1}\pnode(10;2){q2}\pnode(10;1){q3}\pnode(10;0){q4}
\pnode(10;11){q5}\pnode(10;10){q6}\pnode(10;9){q7}\pnode(10;8){q8}
\pnode(10;7){q9}\pnode(10;6){q10}\pnode(10;5){q11}\pnode(10;4){q12}
\pscircle[dimen=middle](0,0){10}
\psline[fillstyle=solid,fillcolor=gray](q1)(q5)(q12)(q1)
\psline[fillstyle=solid,fillcolor=gray](q2)(q3)
\psline[fillstyle=solid,fillcolor=gray](q6)(q8)(q9)(q6)
\drawwhitedot{q1}\drawwhitedot{q2}\drawwhitedot{q3}\drawwhitedot{q4}
\drawwhitedot{q5}\drawwhitedot{q6}\drawwhitedot{q7}\drawwhitedot{q8}
\drawwhitedot{q9}\drawwhitedot{q10}\drawwhitedot{q11}\drawwhitedot{q12}
\psset{unit=1.4mm}
\pnode(10;3){q1}\pnode(10;2){q2}\pnode(10;1){q3}\pnode(10;0){q4}
\pnode(10;11){q5}\pnode(10;10){q6}\pnode(10;9){q7}\pnode(10;8){q8}
\pnode(10;7){q9}\pnode(10;6){q10}\pnode(10;5){q11}\pnode(10;4){q12}
\rput(q1){$\scriptstyle1$}\rput(q2){$\scriptstyle2$}\rput(q3){$\scriptstyle3$}\rput(q4){$\scriptstyle4$}\rput(q5){$\scriptstyle5$}\rput(q6){$\scriptstyle6$}\rput(q7){$\scriptstyle7$}\rput(q8){$\scriptstyle8$}\rput(q9){$\scriptstyle9$}\rput(q10){$\scriptstyle10$}\rput(q11){$\scriptstyle11$}\rput(q12){$\scriptstyle12$}
\end{picture}
\hspace{10mm}
\begin{picture}(25,27)(-10,-13)
\psset{unit=1.1mm, linewidth=.8pt, ncurv=1}\SpecialCoor\degrees[12]
\pnode(10;3){q1}\pnode(10;2){q2}\pnode(10;1){q3}\pnode(10;0){q4}
\pnode(10;11){q5}\pnode(10;10){q6}\pnode(10;9){q7}\pnode(10;8){q8}
\pnode(10;7){q9}\pnode(10;6){q10}\pnode(10;5){q11}\pnode(10;4){q12}
\pscircle[dimen=middle](0,0){10}
\psline[linestyle=dotted, dotsep=1.5pt](q1)(q5)(q12)(q1)
\psline[linestyle=dotted, dotsep=1.5pt](q2)(q3)
\psline[linestyle=dotted, dotsep=1.5pt](q6)(q8)(q9)(q6)
\psline[linecolor=red](q1)(q3)(q4)(q1)
\psline[linecolor=red](q5)(q9)(q10)(q11)(q5)
\psline[linecolor=red](q6)(q7)
\drawwhitedot{q1}\drawwhitedot{q2}\drawwhitedot{q3}\drawwhitedot{q4}
\drawwhitedot{q5}\drawwhitedot{q6}\drawwhitedot{q7}\drawwhitedot{q8}
\drawwhitedot{q9}\drawwhitedot{q10}\drawwhitedot{q11}\drawwhitedot{q12}
\psset{unit=1.4mm}
\pnode(10;3){q1}\pnode(10;2){q2}\pnode(10;1){q3}\pnode(10;0){q4}
\pnode(10;11){q5}\pnode(10;10){q6}\pnode(10;9){q7}\pnode(10;8){q8}
\pnode(10;7){q9}\pnode(10;6){q10}\pnode(10;5){q11}\pnode(10;4){q12}
\rput(q1){$\scriptstyle1$}\rput(q2){$\scriptstyle2$}\rput(q3){$\scriptstyle3$}\rput(q4){$\scriptstyle4$}\rput(q5){$\scriptstyle5$}\rput(q6){$\scriptstyle6$}\rput(q7){$\scriptstyle7$}\rput(q8){$\scriptstyle8$}\rput(q9){$\scriptstyle9$}\rput(q10){$\scriptstyle10$}\rput(q11){$\scriptstyle11$}\rput(q12){$\scriptstyle12$}
\end{picture}
\hspace{10mm}
\begin{picture}(25,27)(-10,-13)
\psset{unit=1.1mm, linewidth=.8pt, ncurv=1}\SpecialCoor\degrees[12]
\pnode(10;3){q1}\pnode(10;2){q2}\pnode(10;1){q3}\pnode(10;0){q4}
\pnode(10;11){q5}\pnode(10;10){q6}\pnode(10;9){q7}\pnode(10;8){q8}
\pnode(10;7){q9}\pnode(10;6){q10}\pnode(10;5){q11}\pnode(10;4){q12}
\pscircle[dimen=middle](0,0){10}
\psline[fillstyle=solid,fillcolor=gray](q1)(q2)
\psline[fillstyle=solid,fillcolor=gray](q4)(q11)(q12)(q4)
\psline[fillstyle=solid,fillcolor=gray](q5)(q7)(q8)(q5)
\psline[linecolor=red](q1)(q3)(q4)(q1)
\psline[linecolor=red](q5)(q9)(q10)(q11)(q5)
\psline[linecolor=red](q6)(q7)
\drawwhitedot{q1}\drawwhitedot{q2}\drawwhitedot{q3}\drawwhitedot{q4}
\drawwhitedot{q5}\drawwhitedot{q6}\drawwhitedot{q7}\drawwhitedot{q8}
\drawwhitedot{q9}\drawwhitedot{q10}\drawwhitedot{q11}\drawwhitedot{q12}
\psset{unit=1.4mm}
\pnode(10;3){q1}\pnode(10;2){q2}\pnode(10;1){q3}\pnode(10;0){q4}
\pnode(10;11){q5}\pnode(10;10){q6}\pnode(10;9){q7}\pnode(10;8){q8}
\pnode(10;7){q9}\pnode(10;6){q10}\pnode(10;5){q11}\pnode(10;4){q12}
\rput(q1){$\scriptstyle1$}\rput(q2){$\scriptstyle2$}\rput(q3){$\scriptstyle3$}\rput(q4){$\scriptstyle4$}\rput(q5){$\scriptstyle5$}\rput(q6){$\scriptstyle6$}\rput(q7){$\scriptstyle7$}\rput(q8){$\scriptstyle8$}\rput(q9){$\scriptstyle9$}\rput(q10){$\scriptstyle10$}\rput(q11){$\scriptstyle11$}\rput(q12){$\scriptstyle12$}
\end{picture}
\caption[]{\sf\smaller The polygons associated with a partition~$\pi$ and with its image under the complement map~$\partial$ and~$\partial^2$: the red partition is the Kreweras complement of~$\pi$, and repeating the operation leads to the image of the initial partition under a rotation by~$2\pi/\nn$, corresponding to conjugating under~$\Deltad_\nn$ in~$\BKL\nn$.}
\label{F:Kreweras}
\end{figure}
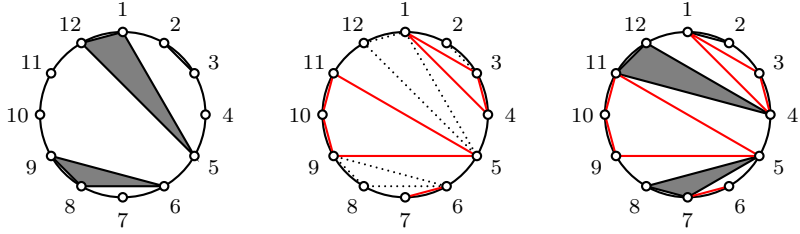

Putting pieces together, we obtain:

\begin{prop}
\label{P:Dual3}
For all~$\nn$ and~$\dd$, the number~$\bbs\nn\dd$ of~$\nn$-strand braids that have length at most~$\dd$ with respect to the Garside structure~$\SS_\nn^*$ is given by
\begin{equation}
\label{E:Dual3}
\bbs\nn\dd = \card\{(\pi_1 \wdots \pi_\dd) \in \NC\nn^\dd \mid \forall\ii< \nn \, (\overline{\pi_\ii} \wedge \pi_{\ii+1} = {\bf0}_\nn)\}.
\end{equation}
\end{prop}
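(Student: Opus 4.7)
The plan is to translate the abstract $\SS_\nn^*$-normality condition of~\eqref{E:Dual2} into the combinatorial meet condition~\eqref{E:Dual3} by stringing together the lemmas already stated, so that nothing new has to be proved beyond verifying that a few bijections respect the relevant structure.

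First, I would apply the bounded-Garside lemma (the one stated just before Lemma~\ref{L:Complement}) with $\Delta = \Deltad_\nn$ to the pair $(\aa{\pi_\ii}, \aa{\pi_{\ii+1}})$: this pair is $\SS_\nn^*$-normal if and only if $\partial\aa{\pi_\ii}$ and $\aa{\pi_{\ii+1}}$ share no left-divisor in~$\BKL\nn$ other than~$1$. By Lemma~\ref{L:Complement}(ii), $\partial\aa{\pi_\ii} = \aa{\overline{\pi_\ii}}$, so the normality condition becomes the statement that $\aa{\overline{\pi_\ii}}$ and $\aa{\pi_{\ii+1}}$ admit no nontrivial common left-divisor in~$\BKL\nn$.

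Second, I would verify that, under the bijection $\pi \mapsto \aa\pi$ of Lemma~\ref{L:Order}, left-divisibility on $\SS_\nn^*$ corresponds to the refinement order on~$\NC\nn$. For the forward direction, if $\pi \le \pi'$ then every polygon of~$\pi$ sits inside some polygon of~$\pi'$, so every chord $(\ii,\jj)$ occurring in the factorization of $\aa\pi$ lies inside the convex hulls attached to~$\pi'$, whence by Lemma~\ref{L:Complement}(i) each factor $\aaa\ii\jj$, and hence $\aa\pi$ itself, left-divides $\aa{\pi'}$; the converse uses the same lemma to read off the polygons of a left-divisor. Because in a Garside monoid every left-divisor of a simple element is simple, the common left-divisors in~$\BKL\nn$ of $\aa{\overline{\pi_\ii}}$ and~$\aa{\pi_{\ii+1}}$ are precisely the $\aa\pi$ with $\pi \le \overline{\pi_\ii}$ and $\pi \le \pi_{\ii+1}$, i.e., with $\pi \le \overline{\pi_\ii} \wedge \pi_{\ii+1}$ in the lattice~$\NC\nn$.

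Third, since $\aa{{\bf0}_\nn}$ has no chords and so equals the identity braid, the triviality of common left-divisors is equivalent to $\overline{\pi_\ii} \wedge \pi_{\ii+1} = {\bf0}_\nn$. Substituting this reformulation into~\eqref{E:Dual2} yields~\eqref{E:Dual3}. The only genuinely delicate step is the order-isomorphism between $(\NC\nn,\le)$ and $(\SS_\nn^*,\text{left-divisibility})$; everything else is a direct application of the quoted lemmas and of the general theory of bounded Garside structures.
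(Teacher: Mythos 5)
Your proposal follows essentially the same route as the paper: reduce $\SS_\nn^*$-normality to the triviality of common left-divisors via the bounded-Garside lemma, replace $\partial\aa{\pi_\ii}$ by $\aa{\overline{\pi_\ii}}$ using Lemma~\ref{L:Complement}\ITEM2, and translate the triviality condition into $\overline{\pi_\ii}\wedge\pi_{\ii+1}={\bf0}_\nn$ using Lemma~\ref{L:Complement}\ITEM1. The one weak point is your justification of the order-isomorphism between $(\NC\nn,\le)$ and $(\SS_\nn^*,\text{left-divisibility})$: the inference ``each factor $\aaa\ii\jj$ left-divides $\aa{\pi'}$, hence their product $\aa\pi$ left-divides $\aa{\pi'}$'' is not valid in a general monoid, since left-divisibility is not preserved under taking products of divisors (already in $\BP3$, the braid $\sig1$ left-divides $\sig1\sig2$ twice over, but $\sig1\sig1$ does not). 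Fortunately the full order-isomorphism is more than the proposition requires: to decide whether the common left-divisors of $\aa{\overline{\pi_\ii}}$ and $\aa{\pi_{\ii+1}}$ reduce to~$1$, it suffices to test the atoms $\aaa\ii\jj$, since every nontrivial element of $\BKL\nn$ is left-divisible by some atom; Lemma~\ref{L:Complement}\ITEM1 then converts the existence of a common atomic divisor into the existence of a common chord, that is, into $\overline{\pi_\ii}\wedge\pi_{\ii+1}\ne{\bf0}_\nn$. This atom-level shortcut is exactly what the paper's proof does, and with it your argument closes without needing the delicate step you flagged.
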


\begin{proof}
By Lemma~\ref{L:Complement}\ITEM1, saying that $1$ is the only common left-divisor of~$\aa\pi$ and~$\aa{\pi'}$ in~$\BKL\nn$ amounts to saying that the convex hulls of the polygons associated with~$\pi$ and~$\pi'$ have no chord in common, hence are disjoint, so, in other words, that the meet of~$\pi$ and~$\pi'$ in the lattice~$\NC\nn$ is the minimal partition~${\bf0}_\nn$. By Lemma~\ref{L:Complement}\ITEM2, this condition has to be applied to $\overline{\pi_\ii}$ and~$\pi_{\ii+1}$ for every~$\ii$.
\end{proof}

\section{Free cumulants}
\label{S:Cumulants}

In view of Proposition~\ref{P:Dual3}, we have to count sequences of noncrossing partitions satisfying lattice constraints involving adjacent entries. We shall derive partial results from a general formula expressing the free cumulants of a product of independent random variables.

\subsection*{Noncrossing partitions and free cumulants} 

We recall some basic facts, more details can be found in~\cite{NiSp}.

Given a sequence of indeterminates $T_1 \wdots T_l,...$ and a noncrossing partition $\pi$ in~$\NC\nn$, one defines 
\begin{equation}
T_\pi=\prod_{p\in \pi} T_{|p|},
\end{equation}
where the product ranges over all parts $p$ of $\pi$ and $|p|$ is the number of elements of~$p$ in~$\{1 \wdots n\}$.

Two sequences of indeterminates $M_1 \wdots M_l,...$ and $R_1 \wdots R_l,...$
are related by the moment-cumulant formula if, for every $n$, one has
\begin{equation}
\label{MR}
M_n=\sum_{\pi\in \NC\nn}R_\pi.
\end{equation}
It is easy to see that this equation can be inverted and the $R_l$ can be expressed as polynomials in the $M_l$. In fact, introducing the generating series 
$$M(z)=1+\sum_{l=1}^\infty z^lM_l,\qquad R(z)=1+\sum_{l=1}^\infty z^lR_l,$$
we can recast the relation~\eqref{MR} in the form
\begin{equation}
\label{MRbis}
R(zM(z))=M(z).
\end{equation}
If the $M_l$ are the moments of a probability measure (or a random variable), then the quantities~$R_l$ are called the \emph{free cumulants} of the probability measure (or random variable).

\subsection*{Free cumulants of a product of independent random variables}

We establish now the general formula for free cumulants of products of independent random variables stated as Theorem~\ref{T:CumProd} in the introduction. 

\begin{proof}[Proof of Theorem~\ref{T:CumProd}]
Let $M^{(i)}_n$ be the moments of $X_i$. Let us write the moment-cumulant formula for each of the variables in the product $X_1X_2\pdots X_k$. As the $n$th moment of  $X_1X_2\pdots X_k$ is
$$M_n=M_n^{(1)}\pdots M_n^{(k)},$$ 
one has
$$M_n=M_n^{(1)}\pdots M_n^{(k)}=\sum_{\pi_1 \wdots \pi_k\in \NC\nn}\prod_{i=1}^kR^{(i)}_{\pi_i}.$$
Let us now decompose the sum in the right hand side according to the value of 
$\pi=\pi_1\vee\pdots\vee\pi_k$. Since $\pi_i\le\pi$ holds, each part of $\pi_i$ is included in some part of $\pi$. Let $p$ be a part of $\pi$. The intersections $\pi_{i,p}:=\pi_i\cap p$  form a partition of the set $p$. If we identify $p$ with 
$\{1 \wdots |p|\}$ by the only increasing bijection, then the sets~$\pi_{i,p}$ 
form  noncrossing partitions of $\{1 \wdots |p|\}$. Furthermore one has
$$\pi_{1,p}\vee\pdots\vee\pi_{n,p}={\bf 1}_{|p|},$$
whence
$$M_n=\sum_{\pi\in \NC\nn}\prod_{p\in \pi}\left[\sum_{\pi_{1,p}\vee\pdots\vee\pi_{n,p}={\bf 1}_{|p|}}\prod_iR^{(i)}_{\pi_{i,p}}\right].$$
Defining the sequence $Q_n$ by 
$$Q_n=\sum_{\pi_1\vee\pdots\vee\pi_n={\bf 1}_n}\prod_iR^{(i)}_{\pi_i},$$
we obtain 
$$M_n=\sum_{\pi\in \NC\nn}\prod_{p\in \pi}Q_{|p|}=\sum_{\pi\in \NC\nn}Q_{\pi}.$$
Then it follows from~\eqref{MR} that the quantities~$Q_n$ are the free cumulants of the sequence of moments $M_1 \wdots M_n, ...$\,. 
\end{proof}

The above argument is closely related to an argument in \cite{Spe}. The first author would like to thank Roland Speicher for pointing out this reference.

\begin{coro}
\label{C:Cumulants}
The number of $k$-tuples $(\pi_1 \wdots \pi_k)$ in $\NC\nn^\kk$ satisfying $\pi_1\vee\pdots\vee\pi_k={\bf 1}_n$ is the $n$th free cumulant of the variable $X_1^2\pdots X_k^2$ where $X_1 \wdots X_n, ...$ are independent centered semi-circular variables of variance 1.
\end{coro}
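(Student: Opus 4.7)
The plan is to apply Theorem~\ref{T:CumProd} directly to the $k$-tuple $(Y_1 \wdots Y_k) := (X_1^2 \wdots X_k^2)$. The $X_i$ are commuting classical independent random variables, hence so are the $Y_i$, so the hypotheses of the theorem are met. The theorem then expresses the $n$th free cumulant of the product $Y_1 \pdots Y_k = X_1^2 \pdots X_k^2$ as
\begin{equation*}
R_n = \sum_{\pi_1 \vee \pdots \vee \pi_k = {\bf 1}_n} \prod_i R^{(i)}_{\pi_i},
\end{equation*}
where $R^{(i)}_m$ denotes the $m$th free cumulant of $Y_i = X_i^2$. The whole content of the corollary reduces to showing that all these free cumulants are equal to~$1$: once this is established, every summand equals $1$ and $R_n$ is literally the number of $k$-tuples $(\pi_1 \wdots \pi_k)$ with $\pi_1 \vee \pdots \vee \pi_k = {\bf 1}_n$.

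To compute the free cumulants of $Y_i = X_i^2$, I would recall that a centered semicircular variable $X$ of variance $1$ has moments $\mathbb{E}(X^{2m}) = \Cat{m}$ and $\mathbb{E}(X^{2m+1}) = 0$. Hence the moments of $Y_i$ are
\begin{equation*}
M^{(i)}_m = \mathbb{E}(X_i^{2m}) = \Cat{m}.
\end{equation*}
The moment-cumulant formula~\eqref{MR} determines the cumulants uniquely from the moments, so it suffices to exhibit a sequence of candidate cumulants whose associated moments match. Taking $R^{(i)}_m = 1$ for every $m \ge 1$ gives $R^{(i)}_\pi = \prod_{p \in \pi} 1 = 1$ for every $\pi \in \NC\nn$, so that
\begin{equation*}
\sum_{\pi \in \NC\nn} R^{(i)}_\pi = \card \NC\nn = \Cat n = M^{(i)}_n,
\end{equation*}
confirming the identification. (Equivalently, one recognises the distribution of $X_i^2$ as the standard Marchenko--Pastur law, whose free cumulants are classically known to be all~$1$.)

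Plugging $R^{(i)}_{\pi_i} = 1$ into the formula from Theorem~\ref{T:CumProd} yields the statement. There is no real obstacle here: the only nontrivial input beyond the cited theorem is the computation of the free cumulants of $X^2$ for $X$ standard semicircular, which is a textbook calculation carried out in one line via the Catalan-number identity $\card\NC\nn = \Cat n$.
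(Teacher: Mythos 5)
Your proposal is correct and is exactly the argument the paper intends: the corollary is stated without proof as an immediate consequence of Theorem~\ref{T:CumProd}, the only needed input being that the free cumulants of $X_i^2$ are all equal to~$1$, which you verify correctly from the moments $\mathbb{E}(X_i^{2m})=\Cat{m}$ and the invertibility of the moment-cumulant relation~\eqref{MR}. Nothing is missing.
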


\subsection*{Classical and Boolean cumulants}
Cumulants can also be defined using the lattice of all set partitions of $\{1 \wdots n\}$ (this is the classical case, studied by Rota, Sch\"utzenberger...), or the lattice of interval partitions (these are the Boolean cumulants, see \cite{SpeW}). In both cases it is immediate to check that the proof of Theorem~\ref{T:CumProd} goes through and gives a formula for computing the corresponding cumulants of a product.

\section{Back to braids}
\label{S:Back}

We now apply the result of Section~\ref{S:Cumulants} to braids.

\subsection*{Incidence matrices}

For every~$\nn$, there exists a binary relation on~$\NC\nn$ that encodes $\SS_\nn^*$-normality. We introduce the associated incidence matrix.

\begin{defi}
For $\nn \ge 2$, we let $\Mat\nn$ be the $\Cat\nn \times \Cat\nn$-matrix whose entries are indexed by noncrossing partitions, and such that $(\Mat\nn)_{\pi, \pi'}$ is $1$ (\resp $0$) if $\overline\pi \wedge \pi' = {\bf0}_\nn$ holds (\resp fails).
\end{defi}

For instance, if the partitions of~$\{1, 2, 3\}$ are enumerated in the order $\Dv, \Da, \Db, \Dc, \DD$, the matrix~$\Mat3$ is $\left(\begin{matrix}
1&0&0&0&0\\
1&1&0&1&0\\
1&1&1&0&0\\
1&0&1&1&0\\
1&1&1&1&1
\end{matrix}\right)$. It follows from the properties of~${\bf0}_\nn$ and~${\bf1}_\nn$ that the column of~${\bf0}_\nn$ and the row of~${\bf1}_\nn$ in~$\Mat\nn$ contain only ones, whereas the row of~${\bf0}_\nn$, its ${\bf0}_\nn$-entry excepted, and the column of~${\bf1}_\nn$, its ${\bf1}_\nn$-entry excepted,  contain only zeroes.

\begin{prop}
\label{P:Dual4}
For all~$\nn$ and~$\dd \ge 1$, the number~$\bbs\nn\dd$ is the sum of all entries in the matrix~$(\Mat\nn)^{\dd-1}$; in particular, $\bbs\nn2$ is the number of positive entries in~$\Mat\nn$.
\end{prop}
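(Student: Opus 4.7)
The plan is to derive the proposition directly from the counting formula established in Proposition~\ref{P:Dual3} together with the standard combinatorial interpretation of powers of a 0/1-matrix as counting walks in the associated directed graph.

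First, I would recall that Proposition~\ref{P:Dual3} gives
$$\bbs\nn\dd = \card\{(\pi_1 \wdots \pi_\dd) \in \NC\nn^\dd \mid \overline{\pi_\ii} \wedge \pi_{\ii+1} = {\bf0}_\nn \text{ for } \ii < \dd\}.$$
By the very definition of $\Mat\nn$, the condition $\overline{\pi_\ii}\wedge\pi_{\ii+1}={\bf0}_\nn$ is equivalent to $(\Mat\nn)_{\pi_\ii,\pi_{\ii+1}}=1$, so the indicator of the constraint for a tuple $(\pi_1 \wdots \pi_\dd)$ factors as the product $\prod_{\ii=1}^{\dd-1}(\Mat\nn)_{\pi_\ii,\pi_{\ii+1}}$.

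Next I would rewrite $\bbs\nn\dd$ as a sum of such indicator products over all tuples and recognize it as an iterated matrix product:
$$\bbs\nn\dd = \sum_{\pi_1 \wdots \pi_\dd \in \NC\nn} \prod_{\ii=1}^{\dd-1}(\Mat\nn)_{\pi_\ii,\pi_{\ii+1}} = \sum_{\pi_1,\pi_\dd \in \NC\nn}((\Mat\nn)^{\dd-1})_{\pi_1,\pi_\dd},$$
using the standard identity that $((\Mat\nn)^{\dd-1})_{\pi,\pi'}$ equals the number of length-$(\dd-1)$ walks from $\pi$ to $\pi'$ in the digraph whose adjacency matrix is $\Mat\nn$. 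This is exactly the sum of all entries of $(\Mat\nn)^{\dd-1}$, proving the first assertion. For $\dd=2$ one has $(\Mat\nn)^{\dd-1}=\Mat\nn$, whose entries are only $0$ or $1$, and its sum of entries coincides with the number of positive (i.e. equal to $1$) entries, giving the second assertion.

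There is no real obstacle here: everything reduces to matching the normality constraint in~\eqref{E:Dual3} to the defining condition of~$\Mat\nn$, and then invoking the walk-counting interpretation of matrix powers. The only thing to watch is the off-by-one convention ($\dd$ terms in the tuple produce $\dd-1$ constraints, hence the exponent $\dd-1$), and the fact that the statement is unambiguous for $\dd=1$, where $(\Mat\nn)^0$ is the identity and the sum of its entries is $\Cat\nn=\bbs\nn1$.
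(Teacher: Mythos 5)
Your proposal is correct and follows essentially the same route as the paper: both arguments match the condition $\overline{\pi_\ii}\wedge\pi_{\ii+1}={\bf0}_\nn$ from Proposition~\ref{P:Dual3} to the defining entries of~$\Mat\nn$ and then invoke the walk-counting interpretation of matrix powers (the paper phrases this as an induction on~$\dd$ via the vector $(1,1\wdots1)\,(\Mat\nn)^{\dd-1}$, which is the same computation as your direct expansion of the product of indicators). Your handling of the $\dd=1$ and $\dd=2$ cases also matches the paper's remarks.
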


\begin{proof}
For~$\pi$ in~$\NC\nn$ let $\bbs\nn\dd(\pi)$ be the number of $\SS_\nn^*$-normal sequences of  length~$\dd$ whose last entry is~$\pi$. By Proposition~\ref{E:Dual3}, a length~$\dd$ sequence $(\pi_1 \wdots \pi_{\dd-1}, \pi)$ contributes to~$\bbs\nn\dd(\pi)$ if and only if  $(\pi_1 \wdots \pi_{\dd-1})$ contributes to~$\bbs\nn{\dd-1}(\pi_{\dd-1})$ and $(\pi_{\dd-1}, \pi)$ contributes to~$\bbs\nn2(\pi)$, that is, $(\Mat\nn)_{\pi_{\dd-1}, \pi} = 1$ holds. We deduce
$$\bbs\nn\dd(\pi) = \sum_{\pi' \in \NC\nn} \bbs\nn{\dd-1}(\pi') \cdot (\Mat\nn)_{\pi', \pi}.$$
From there, an obvious induction shows that $\bbs\nn\dd(\pi)$ is the $\pi$th entry in $(1, 1 \wdots  1) \, (\Mat\nn)^{\dd-1}$, and the result follows by summing over all~$\pi$.
\end{proof}

(Note that, for $\dd = 1$, Proposition~\ref{P:Dual4} gives $\bbs\nn1 = \Cat\nn$, which is indeed the sum of all entries in the size~$\Cat\nn$ identity-matrix.)

Using Theorem~\ref{T:CumProd}, we can now complete the proof of Proposition~\ref{P:Dual}.

\begin{proof}[Proof of Proposition~\ref{P:Dual}]
By Proposition~\ref{P:Dual4}, $\bbs\nn2$ is the number of positive entries in the matrix~$\Mat\nn$, that is, the number of pairs $(\pi, \pi')$ in~$\NC\nn^2$ satisfying $\overline\pi \wedge \pi' = {\bf0}_\nn$. As the Kreweras complement is bijective, this number is also the number of pairs $(\pi, \pi')$ in~$\NC\nn^2$ satisfying $\pi \wedge \pi' = {\bf0}_\nn$. By complementation, the latter is also the number of pairs satisfying $\pi \vee \pi' = {\bf1}_\nn$. By Corollary~\ref{C:Cumulants}, this number is the $\nn$th free cumulant of~$X_1^2X_2^2$ where $X_1,X_2$ are independent centered semi-circular variables of variance 1. The moments of the latter are the squares of the Catalan numbers, so \eqref{MRbis} gives the expected result.
\end{proof}

\subsection*{Further questions}

It is easy to compute the numbers~$\bbs\nn\dd$ for small values of~$\nn$ and~$\dd$, see Table~\ref{T:Values}.

\begin{table}[htb]
\begin{tabular}{c|r|r|r|r|r|r|r}
\VR(3,2)$\dd$&1&2&3&4&5&6&7\\
\hline
\VR(4,1)$\bbs1\dd$& 1 & 1 & 1 & 1 & 1&1&1\\
\VR(3,1)$\bbs2\dd$& 2 & 3 & 4 & 5 & 6&7&8\\
\VR(3,1)$\bbs3\dd$&5 &15 &83 &177 &367 &749 &1\,515\\
\VR(3,1)$\bbs4\dd$&14 &99 &556 &2\,856&14\,122 &68\,927 &334\,632\\
\VR(3,1)$\bbs5\dd$&42 &773 &11\,124 &147\,855 &1\,917\,046 &24\,672\,817\\
\VR(3,1)$\bbs6\dd$&132 &6\,743 &266\,944 &9\,845\,829 &356\,470\,124 &
\end{tabular}
\vspace{2mm}
\caption[]{\sf\smaller The number~$\bbs\nn\dd$ of $\nn$-strand braids of length at most~$\dd$ in the dual braid monoid~$\BKL\nn$: the first column ($\dd = 1$) contains the Catalan numbers, whereas the second column contains the sequence specified in Proposition~\ref{P:Dual}, which is A168344 in~\cite{Slo}.}
\label{T:Values}
\end{table}

A natural question is to ask for a description of the columns in Table~\ref{T:Values} beyond the first two ones. The characterization of Proposition~\ref{P:Dual} does not extend to $\dd \ge 3$: for instance, we have
$$\bbs\nn3 = \{(\pi_1, \pi_2, \pi_3) \in \NC\nn^3 \mid \overline{\pi_1} \wedge \pi_2 = {\bf0}_\nn \mbox{ and } \overline{\pi_2} \wedge \pi_3 = {\bf0}_\nn\}:$$
replacing $\wedge$ and ${\bf0}_\nn$ with $\vee$ and ${\bf1}_\nn$ is easy, but the Kreweras complement cannot be forgotten in this case.

On the other hand, describing the rows in Table~\ref{T:Values} leads to further natural questions. By Proposition~\ref{P:Dual4}, the generating series of the numbers~$\bbs\nn\dd$ is rational for every~$\nn$, and $\bbs\nn\dd$ can be expressed in terms of the $\dd$-power of the eigenvalues of the matrix~$\Mat\nn$. For instance, one easily finds $\bbs3\dd = 6 \cdot 2 ^\dd - 2\dd - 5$ for every~$\dd$, as well as $\bbs3\dd(\pi) = 2^{\dd + 1} - 1$ for $\pi \not= {\bf0}_\nn, {\bf1}_\nn$ (as above, we write $\bbs\nn\dd(\pi)$ for the number of braids with a normal form finishing with~$\pi$). Very little is known for $\nn \ge 4$.

It would be of interest to compute, or at at least approximate, the spectral radius of the matrix~$\Mat\nn$. Here are the first values.
\begin{table}[htb]
\begin{tabular}{c|c|c|c|c|c|c|c}
\VR(3,2)$n$&$1$&$2$&$3$&$4$&$5$&$6$&$7$\\
\hline
\VR(4,1)$\rho(\Mat\nn)$&$1$&$1$&$2$&$4.83...$&$12.83...$&$35.98...$&$104.87...$
\end{tabular}
\medskip
\caption[]{\sf\smaller Spectral radius of the incidence matrix~$\Mat\nn$}
\label{T:Radius}
\end{table}

\subsection*{A determinant}
Although we are not able to compute explicitly the eigenvalues of the matrix~$\Mat\nn$, we can give a closed formula for its determinant.

\begin{theorem}
\label{T:Det}
 For every~$\nn$, we have 
$$|\det(\Mat\nn)|=\prod_{k=2}^{n}\Cat\kkl^{2n-k-1\choose n-1}.$$
\end{theorem}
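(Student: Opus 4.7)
The plan is to factor the incidence matrix by Möbius inversion on the lattice $\NC\nn$, reducing $|\det\Mat\nn|$ to a product of Möbius function values that is classically computable. Since the Kreweras complement $\phi$ is a bijection of $\NC\nn$, permuting the rows of $\Mat\nn$ by $\phi^{-1}$ yields a (symmetric) matrix $B$ with $B_{\pi,\pi'} = [\pi \wedge \pi' = {\bf 0}_\nn]$, and $|\det \Mat\nn| = |\det B|$.

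The key identity is $[\tau = {\bf 0}_\nn] = \sum_{\sigma \le \tau} \mu_{\NC\nn}({\bf 0}_\nn, \sigma)$. Applied to $\tau = \pi \wedge \pi'$, and using the equivalence $\sigma \le \pi \wedge \pi' \Leftrightarrow (\sigma \le \pi \text{ and } \sigma \le \pi')$, this gives the factorization $B = Z^T D Z$, where $Z_{\sigma,\pi} = [\sigma \le \pi]$ is the zeta matrix of $\NC\nn$ and $D$ is the diagonal matrix with entries $\mu_{\NC\nn}({\bf 0}_\nn, \sigma)$. A linear extension of~$\le$ makes $Z$ upper-triangular with ones on the diagonal, so $\det Z = 1$, and we conclude $|\det \Mat\nn| = \prod_{\sigma \in \NC\nn} |\mu_{\NC\nn}({\bf 0}_\nn, \sigma)|$.

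I would then invoke the standard factorization of intervals $[{\bf 0}_\nn, \sigma] \cong \prod_{b \in \sigma} \NC{|b|}$ in the noncrossing partition lattice, combined with Kreweras's formula $\mu_{\NC{n}}({\bf 0}_n, {\bf 1}_n) = (-1)^{n-1} \Cat{n-1}$, giving $\mu_{\NC\nn}({\bf 0}_\nn, \sigma) = \prod_{b \in \sigma}(-1)^{|b|-1} \Cat{|b|-1}$. Collecting absolute values by block size yields $|\det \Mat\nn| = \prod_{k=2}^n \Cat{k-1}^{N_k}$, where $N_k$ is the total number of size-$k$ blocks summed over all $\sigma \in \NC\nn$ (terms with $k=1$ drop out since $\Cat0 = 1$).

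It remains to identify $N_k = \binom{2n-k-1}{n-1}$. Using the cyclic $\ZZZZ/n$-symmetry of $\NC\nn$ (induced by rotating the marked disk) and double-counting, one gets $k N_k = n f_k(n)$, where $f_k(n)$ is the number of partitions in which $1$ lies in a size-$k$ block. Decomposing such a partition by the positions $1 = i_1 < \cdots < i_k$ of its distinguished block and filling the resulting $k$ arcs independently by noncrossing subpartitions gives $f_k(n) = [z^{n-k}]\, C(z)^k$, where $C(z) = \sum_n \Cat n z^n$. The Lagrange-inversion identity $[z^m] C(z)^k = \tfrac{k}{k+2m}\binom{k+2m}{m}$ then produces $N_k = \tfrac{n}{2n-k}\binom{2n-k}{n} = \binom{2n-k-1}{n-1}$. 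The main conceptual step is the Möbius factorization $B = Z^T D Z$; once it is in place, the subsequent identities on the lattice $\NC\nn$ and on Catalan generating functions are routine.
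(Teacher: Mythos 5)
Your proposal is correct. Its structural core is the same as the paper's: you factor the matrix $[\pi\wedge\pi'={\bf0}_\nn]$ through the zeta matrix of $\NC\nn$ (your $B=Z^TDZ$ is exactly the specialization of the paper's Lemma~\ref{L:det} to $\varphi=[\,\cdot={\bf0}_\nn]$), dispose of the Kreweras complement as a determinant-preserving permutation, and invoke the multiplicativity of the M\"obius function of $\NC\nn$ to reduce everything to the exponent $N_k=\sum_{\sigma\in\NC\nn}p_k(\sigma)$. Where you genuinely diverge is in evaluating $N_k$. The paper introduces a bivariate generating function $f(z,y,k)$ weighting partitions by $y^{p_k}$, derives a functional equation, differentiates at $y=1$, and then identifies the resulting series with $\sum_n\binom{2n-k-1}{n-1}z^n$ by checking that both satisfy the recursion $h_{k+1}=h_k-zh_{k-1}$ with the same initial terms. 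You instead use the rotational symmetry of $\NC\nn$ to double-count, getting $kN_k=nf_k(n)$ with $f_k(n)=[z^{n-k}]C(z)^k$, and then apply the Lagrange-inversion formula $[z^m]C(z)^k=\tfrac{k}{k+2m}\binom{k+2m}{m}$ to land directly on $N_k=\tfrac{n}{2n-k}\binom{2n-k}{n}=\binom{2n-k-1}{n-1}$. Your route yields a closed form in one step and avoids the recursion-matching and the unverified initial cases the paper leaves ``to the reader''; the paper's route avoids appealing to Lagrange inversion and the cyclic-symmetry argument, staying entirely within elementary manipulations of the quadratic equation for $\phi(z)$. All the identities you rely on (the defining property of $\mu$, the interval factorization $[{\bf0}_\nn,\sigma]\cong\prod_b\NC{|b|}$, Kreweras's value $\mu({\bf0}_n,{\bf1}_n)=(-1)^{n-1}\Cat{n-1}$, and the ballot-number coefficient of $C(z)^k$) are standard and correctly applied, so the argument is complete.
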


Before proving this formula, we need an auxiliary result  involving M\"obius matrices. By definition, if  $X$  is  a finite poset,  the associated M\"obius matrix~$\mu$ is the inverse of  the order matrix~$\zeta$ indexed by the elements of $X$  and given by 
$$\zeta(x,y)=\begin{cases}1&\text{for $x\geq y$},\\0&\text{otherwise}.\end{cases}$$
If the elements of $X$ are ordered according to a linear extension of the partial order of $X$, then $\zeta$ is a lower triangular matrix, with  ones  on the diagonal.  In  particular, its determinant is 1. The same is true for $\mu$.

\begin{lemm}
\label{L:det}
Assume that $X$ is a lattice,  and $\varphi$ is a complex valued function on $X$. Let $\Phi$ be the matrix  defined by  $\Phi(x,y)=\varphi(x\wedge y)$.  Then we have 
$$\det(\Phi)=\prod_{x\in X}\hat\varphi(x),$$
 where $\hat\varphi$ is given by  $\hat \varphi(x)=\sum_{y\leq x}\mu(x,y)\varphi(y)$.
\end{lemm}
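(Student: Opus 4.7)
My plan is to factor the matrix $\Phi$ as a triple product involving the $\zeta$-matrix of the lattice and a diagonal matrix. This will reduce the determinant to the product along the diagonal.

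First I would invert the definition of $\hat\varphi$ by standard Möbius inversion on the poset~$X$: since $\hat\varphi(x) = \sum_{y \le x}\mu(x,y)\varphi(y)$, we recover
\[
\varphi(x) = \sum_{y \le x}\hat\varphi(y).
\]
Applying this with $x$ replaced by $x \wedge y$ and rewriting $[z \le x \wedge y]$ as $[z \le x]\,[z \le y] = \zeta(x,z)\,\zeta(y,z)$ yields
\[
\Phi(x,y) = \varphi(x \wedge y) = \sum_{z \in X}\zeta(x,z)\,\hat\varphi(z)\,\zeta(y,z).
\]

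Next I would recognize the right-hand side as the $(x,y)$-entry of the matrix product $\zeta\,D_{\hat\varphi}\,\zeta^{T}$, where $D_{\hat\varphi}$ is the diagonal matrix with entries~$\hat\varphi(z)$. Choosing an ordering of~$X$ compatible with a linear extension of the partial order (as in the paragraph preceding the lemma), the matrix $\zeta$ is triangular with ones on the diagonal, so $\det(\zeta) = \det(\zeta^{T}) = 1$. Taking determinants of $\Phi = \zeta\,D_{\hat\varphi}\,\zeta^{T}$ then gives
\[
\det(\Phi) = \det(\zeta)^{2}\,\det(D_{\hat\varphi}) = \prod_{x \in X}\hat\varphi(x),
\]
which is the claimed identity.

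There is essentially no obstacle here: the only point to verify carefully is the sign-free formulation, namely that the factorization produces $\det(\zeta)^{2}$ rather than $\det(\zeta)\det(\mu)$, so the result comes out as a straight product with no Möbius factors. The lattice hypothesis is used only to ensure that $x \wedge y$ is defined for every pair, so that $\Phi$ makes sense; the factorization argument itself never invokes the join operation.
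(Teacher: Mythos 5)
Your proof is correct and rests on exactly the same identity as the paper's, namely $\Phi(x,y)=\sum_{z}\zeta(x,z)\zeta(y,z)\hat\varphi(z)$, i.e.\ the factorization $\Phi=\zeta D_{\hat\varphi}\zeta^{T}$; the paper merely phrases the conclusion by multiplying on the left by $\mu$ and observing that $\mu\Phi$ is triangular with the $\hat\varphi(x)$ on the diagonal, which is the same computation. No substantive difference.
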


 Note that, under the above assumptions, we have $\varphi(x)=\sum_{y\leq x}\hat \varphi(y)$ by definition of the M\"obius matrix. 

\begin{proof}[Proof of Lemma~\ref{L:det}]
First observe that $\zeta(x\wedge y,z)=\zeta(x,z)\zeta( y,z)$, therefore
\begin{eqnarray*}\Phi(x,y)=\varphi(x\wedge y)&=&\sum_{z\in X}\zeta(x\wedge y,z)\hat \varphi(z)
\\&=&\sum_{z\in X}\zeta(x,z) \zeta(y,z)\hat \varphi(z).
\end{eqnarray*}
 Multiplying  the matrix $\Phi$ on the left by $\mu$, one obtains
\begin{eqnarray*}\mu*\Phi(x,y)&=&\sum_u\mu(x,u)\Phi(u,y)\\&=&\sum_u\sum_z\mu(x,u)\zeta(u,z) \zeta(y,z)\hat \varphi(z)\\&=&\zeta(y,x)\hat \varphi(x),\end{eqnarray*}
 the last equality coming from  $\sum_u\mu(x,u)\zeta(u,z)=\delta_{xz}$.
It follows that the matrix $\mu*\Phi$ is upper diagonal, with the numbers $\hat \varphi(x)$ on the diagonal.
\end{proof}

\begin{proof}[Proof of Theorem~\ref{T:Det}]
We apply  Lemma~\ref{L:det}  to $X=\NC\nn$  and $\varphi$ defined by  $\varphi(\pi)=1$ for $\pi={\bf0}_n$ and $\varphi(\pi)=0$ for $\pi\ne {\bf0}_n$. We obtain $\hat\varphi(\pi)=\mu(\pi,{\bf0}_n)$. It is known that the  M\"obius function for the lattice of noncrossing partitions is multiplicative: if $\pi$ is made of $p_k$ parts of size $k$ for $k=1,2,...$, then  one has 
$$\mu(\pi,{\bf0}_n)=\prod_k((-1)^{k-1}\Cat\kkl)^{p_k}.$$ 

The matrix $\Mat\nn$ coincides with the  corresponding  matrix $\Phi$ up to a permutation of the columns (given by taking the Kreweras complement).
 We deduce 
$$|\det(\Mat\nn)|=\prod_k\Cat\kkl^{a_{k,n}},$$
 with  $$a_{k,n}=\sum_{\pi\in \NC\nn}p_k(\pi).$$
 In order to compute $a_{k,n}$, we introduce the generating functions
$$f_{n,k}(y)=\sum_{\pi\in\NC\nn}y^{p_k} \quad\text{ and } \quad f(z,y,k)=\sum_{n\geq k}z^nf_{n,k}(y).$$ 
 Then  one has
$$a_{k,n}=f_{n,k}'(1),$$ whereas $f_n(1)=\Cat\nn$ and
$$\phi(z):=f(z,1,k)=\frac{1-\sqrt{1-4z}}{2z}.$$ 
Let $\pi$ be  a noncrossing partition, such that the size of the part containing~1 is~$l$. The complement of this part  consists  of $l$ (possibly empty) noncrossing partitions $\pi_1 \wdots \pi_l$, whose sizes add up to $n-l$. Moreover,  we find 
\begin{align*}
p_k(\pi)
&=p_k(\pi_1)+\pdots+p_k(\pi_l)
&\text{ for  $l\ne k$},\\
&=p_k(\pi_1)+\pdots+p_k(\pi_l)+1
&\text{ for  $l=k$}.
\end{align*}
It follows from these considerations that the generating function  $f(z,y,k)$ satisfies the equation
$$f(z,y,k)=1/(1-zf(z,y,k))+(y-1)z^kf^k(z,y,k).$$
In particular, for $y=1$, we recover 
\begin{equation}\label{phi}\phi(z)=1/(1-z\phi(z)).\end{equation}
Differentiating with respect to $y$ and evaluating at $y=1$, we  find 
$$f_y(z,1,k)=zf_y(z,1,k)/(1-z\phi(z))^2+z^k\phi^k(z).$$
By (\ref{phi}), one has
$$1-z/(1-z\phi(z))^2=2-\phi(z),$$  whence 
$$\sum_{n\geq k}a_{n,k}z^n=f_y(z,1,k)=z^k\phi^k(z)/(2-\phi(z)).$$
It follows that the  generating series $h_k:=f_y(z,1,k)$ satisfy the  inductive rules  $h_{k+1}=h_k-zh_{k-1}$.  By  Pascal's relation for binomial coefficients,
 the series  $g_k:=\sum_{n\geq k}z^n{2n-k-1\choose n-1}$ satisfy the same  induction rules.  It remains to check $g_k=h_k$ for $k=0$  and  $1$, which is an easy exercise left to the reader,  to conclude that $g_k = h_k$ holds for every~$k$. 
\end{proof}

\end{document}